\renewcommand{\d}{\mathrm{d}}
\theoremstyle{plain}
\newtheorem{thm}{Theorem}[section]
\theoremstyle{definition}
\newtheorem{rem}{Remark}
\numberwithin{equation}{section}
\title{Liouville theorem for $ \phi $-$F$-symphonic map , $ \phi $-$F$-harmonic map  and and $ \phi $-$\Phi_{S, p, \varepsilon}$ harmonic map with free boundary}
 \author{Xiang-Zhi Cao\footnote{aaa7756kijlp@163.com} \thanks{NanJing  xiaozhuang university, 211171, Nanjing, China} } 
\begin{document}
	\maketitle
	\tableofcontents


\begin{abstract}
	In this paper, we obtained  Liouville theorem for  $ \phi $-$F$-symphonic map ,  $ \phi $-$F$-harmonic map  and $ \phi $-$\Phi_{S, p, \varepsilon}$ harmonic map with free boundary on metric measure space. 

\end{abstract}

	 \section{Introduction}
	 
	 In \cite{MR2050491}, Xu obtained Liouville type theorems for harmonic maps from upper half Euclidean spaces into Riemannian manifolds with free boundaries. In \cite{MR2766664}, Liu obtained  free boundary value problems for $p$-harmonic  maps from  upper half Euclidean spaces, and obtain some Liouville type theorems.

Let $u:\left(M^m, g\right) \rightarrow\left(N^n, h\right)$ be smooth map. Ara  considered $ F $ harmonic map which is the critical point of  the following functional 	 
	 \begin{equation*}
	 	\begin{split}
	 		E_F(u)=\int_M F(\frac{|du |^2}{2}) dv_g.
	 	\end{split}
	 \end{equation*}
One can refer to (\cite{Mitsunori1999Geometry,ara2001stability,ara2001instability}) for more details. We can consider the following functional on metric measure space $ (M,g,e^{-\phi}dv_g) ,$	 
	  \begin{equation*}
	 	\begin{split}
	 		E_{F,\phi}(u)=\int_M F(\frac{|du |^2}{2})e^{-\phi} dv_g.
	 	\end{split}
	 \end{equation*}
	 Its Euler-Lagrange equation is 
	 \begin{equation}
	 	\delta^\nabla\left( (F^{\prime}\left(\frac{|du|^{2}}{2}\right)du \right)-F^\prime(\frac{|du |^2}{2}) du(\nabla \phi)=0,
	 \end{equation}
	 where $ \sigma_{u}(\cdot)= \operatorname{div}(du) $.  Such kind of map is refer as $ \phi $-$ F $ harmonic map. In Theorem \ref{thm2}, we get Liouville type theorem of free boundary prolem for such kind of map .
	 
	In \cite{kawai2011some} , Kawai and Nakauchi introduced symphoic map which is analogous to harmonic map. Symmonic map is the critical point of 	
	\begin{equation*}
		\begin{split}
			E_{sym}(u)=	\int_M \frac{|u^*h|^2}{2}dv_g.
		\end{split}
	\end{equation*}
Its Euler-Lagrange equation is 	
	
%
 
	\begin{equation}
		\delta^\nabla\left( \sigma_{u} \right)=0,
	\end{equation}
	where  $ \sigma_{u}(\cdot)= \langle du(\cdot) ,du(e_i)\rangle du(e_i) $.
	 
In \cite{MR4338267,MR3451407, Misawa2012}, the authors  studied regularity of symphonic map or m-symphonic map. In \cite{Misawa2023,Misawa2018}, they studied heat flow of symphonic map. In \cite{Misawa2022}, examples of symphonic map are constructed. 
In \cite{MR3940323}, Liouville type results of symphonic map was obtained. 	 

On metric measure space $ (M,g,e^{-\phi}dv_g) ,$ we consider $\phi$-$F $ symphonic map which is the critical point of   the functional 	 
\begin{equation*}
	\begin{split}
		E_{sym}(u)=\int_M F(\frac{|u^*h|^2}{2})e^{-\phi} \d v_g.
	\end{split}
\end{equation*}
Its Euler-Lagrange equation is 
\begin{equation}
	\delta^\nabla\left( (F^{\prime}\left(\frac{|u^*h|^{2}}{2}\right)\sigma_{u} \right)-F^\prime(\frac{|u^*h |^2}{4}) du(\nabla \phi)=0,
\end{equation}
where  $ \sigma_{u}(\cdot)= \langle du(\cdot) ,du(e_i)\rangle du(e_i).$

About $ F $-symphonic map, one can get the Liouville type result via conservation law and monotonicity formula if we imposing energy finite condition, one can refer to \cite{han2014monotonicity,Han2021,han2013stability}. In \cite{cao2022liouville}, we obtained Liouville theorem of several generalized maps between Riemannian manifold,including $\phi$-$F $ symphonic map via conservation law method. In \cite{s5689}, we obtained stability of F harmonic map and $ F $-symphonic map with potential.

In this paper,  we aim to generalize \cite[Theorem 4.1]{cao2022liouville} to the case of $\phi$-$F $ symphonic map with free boundary. In Theorem \ref{thmabc}, we get Liouville type theorem of free boundary prolem .


 In \cite{132132132}, the authors considered  $\Phi_{S, p, \varepsilon}$-energy
$$
\begin{aligned}
	E_{\Phi_{S, p, \varepsilon}}(u) 
	& =\int_M\left\{\frac{1}{2 p}\left[\frac{m-2 p}{p^2}|d u|^{2 p}+m^{\frac{p}{2}-1}\left\|u^* h\right\|^p\right]+\frac{1}{4 \varepsilon^n}\left(1-|u|^2\right)^2\right\} d v_g,
\end{aligned}
$$
 where $u^* h(X, Y)=h(d u(X), d u(Y))$. Its Euler-Lagrange equation is

$$ \tau_{\Phi_{S, p, \varepsilon}}(u)=\operatorname{div}\left(\sigma_{p, u}\right)+\frac{1}{\varepsilon^{n}}\left(1-|u|^{2}\right) u, $$
where $$
\sigma_{p, u}(X)=\sum_{j=1}^m m^{\frac{p}{2}-1}\left\|u^* h\right\|^{p-2} h\left(d u\left(e_j\right), d u(X)\right) d u\left(e_j\right)+\frac{m-2 p}{p^2}|d u|^{2 p-2} d u(X).
$$	
Inspired by this functional,  on metric measure space $ (M,g,e^{-\phi}dv_g) ,$  we consider the functional 
	$$
	\begin{aligned}
	E_{\Phi_{S, p, \epsilon},\phi}^R(u)
		& =\int_M\left\{\frac{1}{2 p}\left[\frac{m-2 p}{p^2}|d u|^{2 p}+m^{\frac{p}{2}-1}\left\|u^* h\right\|^p\right]+\frac{1}{4 \varepsilon^n}\left(1-|u|^2\right)^2\right\}e^{-\phi} \d v_g.
	\end{aligned}
	$$	
Its Euler-Lagrange equation is  
	 $$
	 \begin{aligned}
	 		\tau_{\Phi_{S, p, \varepsilon},\phi}(u)=0.
	 \end{aligned}
	 $$	 	
	 where $ \tau_{\Phi_{S, p, \varepsilon},\phi}(u) $ is given by 
	\begin{equation}\label{cccccc}
		\begin{aligned}
			\tau_{\Phi_{S, p, \varepsilon},\phi}(u)=\operatorname{div}\left(\sigma_{p, u}\right)+\frac{1}{\varepsilon^{n}}\left(1-|u|^{2}\right) u-\sigma_{p, u}(\nabla \phi).
		\end{aligned}
	\end{equation}
Such kind of map is coined as $ \phi $-$\Phi_{S, p, \varepsilon}$ harmonic map in this paper.	
In Theorem \ref{thmabc1}, we get Liouville type theorem of free boundary prolem for such kind of map .

In this paper, our main motivation is to extend the results in \cite{MR2766664} and  \cite{MR2050491} to $ \phi $-$ F $-symphonic map  with free boundary, $ \phi $-$ F $ harmonic map with free boundary,$ \phi $-$\Phi_{S, p, \varepsilon}$ harmonic map with free boundary in section 2, section 3, section 4 (see Theorem \ref{thmabc},Theorem \ref{thm2},Theorem \ref{thmabc1}), respectively.  	
	
	\section{$F$ symphonic map  }
	In this section, we will consider $F$-symphonic map with free boundary condition under asymptotic conditions .

	\begin{thm}\label{thm1a}
	 For $d_F < \frac{m}{4}$, let $u:\left(\mathbf{R}_{+}^{m}, f g_{0},e^{-\phi} d v_g\right) \rightarrow(N, h)$ be a $C^{1} $  $ \phi $-$F$--symphonic map with free boundary condition: $u\left(\partial \mathbf{R}_{+}^{m}\right) \subset S \subset N, \frac{\partial u}{\partial v}(x) \perp T_{u(x)} S$ for any $x \in \partial \mathbf{R}_{+}^{m}$, where $ g_0 $ is  the standard Euclidean metric on $ \mathbf{R}_{+}^{m} $ and $f$ is some positive function on $\mathbf{R}_{+}^{m}$ which satisfy
		\[
		(c+4d_F-m+\frac{\partial \phi}{\partial x_i}x_i) f(x) \leq (d_F-\frac{m}{2}) \left( \frac{\partial f}{\partial x_{i}} \cdot x_{i}\right) ,  
		\]
 for some constant  $ c>0.  $  If the energy $E_{F}(u)<\infty$, then $u$ must be a constant map. 
	\end{thm}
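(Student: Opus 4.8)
The plan is to establish the Liouville theorem via a Rellich–Pohozaev type integral identity combined with the free boundary condition. First I would introduce the position vector field $X = x_i \frac{\partial}{\partial x_i}$ on $\mathbf{R}_+^m$ and contract the stress-energy tensor $S_F$ (associated with the $\phi$-$F$-symphonic functional) against $X$. The key computation is to derive the conservation law $\operatorname{div}(S_F) = 0$ in the weighted sense, which holds because $u$ is a critical point of $E_{F,\phi}$ on the metric measure space $(\mathbf{R}_+^m, fg_0, e^{-\phi}dv_g)$. Integrating the pointwise identity $\operatorname{div}(S_F(X)) = \langle S_F, \nabla X\rangle$ over $\mathbf{R}_+^m$ (or over a large half-ball $B_R^+$) and applying the divergence theorem converts the problem into controlling a bulk term against a boundary term.

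Next I would compute $\langle S_F, \nabla X\rangle$ explicitly. Since $\nabla X$ is essentially the identity plus contributions from the conformal factor $f$ and the weight $\phi$, this inner product produces the energy density multiplied by a coefficient that is governed precisely by the hypothesis
\[
(c+4d_F-m+\tfrac{\partial \phi}{\partial x_i}x_i) f(x) \leq (d_F-\tfrac{m}{2}) \Big( \tfrac{\partial f}{\partial x_{i}} \cdot x_{i}\Big).
\]
The condition $d_F < \tfrac{m}{4}$ ensures the algebraic coefficient multiplying the energy density has a definite sign, so that the bulk integral dominates a positive constant $c$ times the total energy. The structural role of $d_F$ here is that the $F$-symphonic density scales with a homogeneity captured by $d_F$, and the inequality is exactly what is needed to make the Pohozaev coefficient strictly positive.

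The crucial point is the treatment of the boundary integral over $\partial \mathbf{R}_+^m$. Here the free boundary condition enters decisively: because $\frac{\partial u}{\partial v}(x) \perp T_{u(x)}S$ and $u(\partial \mathbf{R}_+^m)\subset S$, the relevant contraction of $S_F(X)$ against the outward normal $v = -\frac{\partial}{\partial x_m}$ vanishes pointwise on the boundary. The tangential derivatives of $u$ along $\partial \mathbf{R}_+^m$ lie in $T_{u(x)}S$ while the normal derivative is orthogonal to $T_{u(x)}S$, and since $X$ is tangent to $\partial\mathbf{R}_+^m$ along the boundary (as $x_m = 0$ there, the $\frac{\partial}{\partial x_m}$ component of $X$ vanishes), the stress-energy contributions pair these orthogonal pieces and cancel. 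This is the step I expect to be the main obstacle, since it requires carefully splitting $du$ into its boundary-tangential and normal parts and verifying that each cross term in the $F$-symphonic stress-energy tensor either vanishes by orthogonality or is tangent to the boundary.

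Finally I would combine the three ingredients. With the finite energy hypothesis $E_F(u) < \infty$, I would take the exhaustion $B_R^+ \uparrow \mathbf{R}_+^m$ and show the boundary term on the spherical part $\partial B_R^+ \cap \mathbf{R}_+^m$ tends to zero along a suitable sequence $R_k \to \infty$ (a standard consequence of finiteness of the energy integral). The vanishing of the flat boundary term and the definite sign of the bulk term then force $c\, E_F(u) \le 0$, and since $c > 0$ and the energy density is nonnegative, this yields $|du| \equiv 0$, i.e.\ $u$ is constant. The remaining care is to ensure the weighted measure $e^{-\phi}dv_g$ and the conformal factor $f$ are handled consistently throughout the integration by parts, which is precisely what the stated hypothesis on $f$ and $\phi$ is designed to accommodate.
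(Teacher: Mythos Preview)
Your stress--energy/Pohozaev route with the position vector field $X = x_i\,\partial_{x_i}$ is essentially the same machinery the paper uses, just packaged differently. The paper introduces the scaling family $V_t(x)=u(tx)$, whose infinitesimal generator is exactly $\xi = du(X)$, and computes $\partial_t\Phi(R,t)\big|_{t=1}$ in two ways: once via the first variation formula (this is your $\operatorname{div}S_F=0$ step, giving vanishing bulk by the Euler--Lagrange equation, vanishing flat-boundary contribution by the free boundary condition, and a nonnegative spherical term) and once by direct change of variables in the scaled integral (this is your $\langle S_F,\nabla X\rangle$ computation, producing the coefficient controlled by the hypothesis on $f,\phi$ and $d_F$).

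The only substantive difference is the endgame. You propose to send $R\to\infty$ along a sequence where the spherical boundary term vanishes and conclude $c\,E_F(u)\le 0$ directly. The paper instead combines the two computations into the differential inequality $c\,\Phi(R,1)\le R\,\frac{d}{dR}\Phi(R,1)$, integrates to obtain the growth $\Phi(R,1)\ge C(R/R_0)^c$, and contradicts $E_F(u)<\infty$. Your route does work, but the assertion that the spherical term tends to zero ``as a standard consequence of finite energy'' needs one more line: the boundary integrand is $S_F(X,\nu)=R\,S_F(\nu,\nu)=R\big[F(\tfrac{|u^*h|^2}{2})-F'(\cdot)\,h_u(\nu,\nu)\big]$, and since the second piece is nonnegative you get the upper bound $\int_{\partial B_R^+}S_F(X,\nu)\le R\,\Phi'(R)$; then $\Phi'\in L^1(0,\infty)$ forces $\liminf R\,\Phi'(R)=0$. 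The paper's monotonicity argument simply sidesteps this verification at the cost of one line of ODE integration. Either way the argument closes.
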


\begin{proof}
	For $t \geq 0$, we define a family $\left\{V_{t}\right\}: \mathbf{R}_{+}^{m} \rightarrow N$ of maps by $V_{t}(x):=u(t x)$ for $x \in \mathbf{R}_{+}^{m}$, and set
	\[
	\Phi(R, t):= \int_{B(R)} F(\frac{\left| V_{t}^{*} h\right|^{2}}{2}) e^{-\phi}\d v_g,
	\]
	where $B(R)=\mathbf{R}_{+}^{m} \cap\{x:|x| \leq R\}$.

	Let $ \xi=\left.\frac{\mathrm{d} V_{t}}{\mathrm{~d} t}\right|_{t=1}=du(r\frac{\partial}{\partial r} )$,  applying Green's theorem, we calculate
	$$
	\begin{aligned}
		&\left.\frac{\partial}{\partial t} \Phi(R, t)\right|_{t=1}=\int_{B(R)} \langle \nabla^u \xi , \sigma_{u,F} \rangle e^{-\phi}\d v_g\\ & =\int_{B(R)}\left\langle\mathrm{d}^{*}\left(\sigma_{u,F}\right)-F^\prime(\frac{|u^*h |^2}{4}) du(\nabla \phi), \mathrm{d} u\left(r \frac{\partial}{\partial r}\right)\right\rangle e^{-\phi}\d v_g \\
		& +R \int_{\partial B(R) \cap\left\{x_{m}>0\right\}}\left\langle \sigma_{u,F}\left(\frac{\partial}{\partial v}\right), \mathrm{d} u\left(\frac{\partial}{\partial r}\right)\right\rangle e^{-\phi}\sigma_{R} \\
		& +\int_{\partial B(R) \cap\left\{x_{m}>0\right\}} \left\langle\sigma_{u,F}(v),\xi \right\rangle e^{-\phi}\mathrm{d} x,
	\end{aligned}
	$$
where $\frac{\partial}{\partial v}=f^{-1} \frac{\partial}{\partial r}$. So we get

\begin{equation*}
	\begin{split}
		\left.\frac{\partial}{\partial t} \Phi(R, t)\right|_{t=1}\geq 0
	\end{split}
\end{equation*}
On the other hand,
\begin{equation*}
	\begin{split}
		\Phi(R, t):=& \int_{B(R)} F(\frac{\left| V_{t}^{*} h\right|^{2}}{2}) v_{g}\\
		&=\int_{B(R)} F(f^{-1}\left( (h_{\alpha \beta}(u(t x)) \frac{\partial u^{\alpha}(t x)}{\partial x_{i}} \frac{\partial u^{\beta}(t x)}{\partial x_{j}}\right) ^2) f^{\frac{m}{2}}(x) \mathrm{d}v_{g_0},\\
		&=t^{-m}\int_{B(tR)} F\bigg(f^{-1}(\frac{x}{t})t^4\left( (h_{\alpha \beta}(u( x)) \frac{\partial u^{\alpha}( x)}{\partial x_{i}} \frac{\partial u^{\beta}( x)}{\partial x_{j}}\right) ^2\bigg) f^{\frac{m}{2}}(\frac{x}{t}) e^{-\phi(\frac{x}{t})}\mathrm{d}v_{g_0}.
	\end{split}
\end{equation*}
Then ,we have 

$$
\begin{aligned}
	\frac{d}{dt} 	\Phi(R, t)&= -mt^{-m-1} \int_{  B(R) }F(f^{-1}(\frac{x}{t})t^4 A(x) )  f^{\frac{m}{2}}(x)e^{-\phi(\frac{x}{t})}\mathrm{d}v_{g_0}   \\
	&+t^{-m}\int_{  B(R) }F^{\prime}(f^{-1}(\frac{x}{t})t^4 A(x) ) \frac{d}{dt}(f^{-1}(\frac{x}{t})t^4)A(x)  f^{\frac{m}{2}}(x) e^{-\phi(\frac{x}{t})} \mathrm{d}v_{g_0}\\
	&-t^{-m}\int_{  B(R) }F(f^{-1}(\frac{x}{t})t^4 A(x) ) \frac{m}{2}f^{\frac{m}{2}-1}\frac{1}{t^2} \left( \frac{\partial f}{\partial x_i}x_i\right)e^{-\phi(\frac{x}{t})} \mathrm{d}v_{g_0}  \\
	&	-t^{-m}\int_{  B(R) }F(f^{-1}(\frac{x}{t})t^4 A(x) ) \frac{m}{2}f^{\frac{m}{2}-1} \left( -\frac{\partial \phi}{\partial x_i}x_i\right) e^{-\phi(\frac{x}{t})}\mathrm{d}v_{g_0}  \\
	&+t^{-m}\int_{ \partial B(R) } R^{m-1} F(f^{-1}(x)t^4 A(x) ) f^{\frac{m}{2}} e^{-\phi(\frac{x}{t})} \mathrm{d}v_{g_0}.
\end{aligned}
$$
Hence, 
\begin{equation*}
	\begin{split}
	\frac{d}{dt}\bigg|_{t=1}	\Phi(R, t)\leq -c 	\Phi(R, t)+R \frac{d}{d R}	\Phi(R, t).
	\end{split}
\end{equation*}
Thus, by the standard procedure as in \cite{MR2766664} or  \cite{MR2050491} ,  we get
\begin{equation*}
	\begin{split}
		\int_{B(R)} F(\frac{\left| u^{*} h\right|^{2}}{2}) v_{g}\geq  C (\frac{R}{R_0})^{c}.
	\end{split}
\end{equation*}
By contradiction argument, we can finish the proof easily.
\end{proof}
	
	\begin{thm}\label{thmabc}	Let $ g_0 $ is  the fixed Riemman metric on $ \mathbf{R}_{+}^{m} $.  Let   $ (N^n, h) $ be a Riemannian manifold. 
		Let  $ u: (\mathbf{R}_{+}^{m},g=f^2 g_0,e^{-\phi}\d v_g)\to (N,h)  $ be $C^2$  $ \phi $-$F$-symphonic map. $ F^\prime(\frac{\|u^*h\|^2}{4})<\infty, $  and also
		\begin{equation}\label{er2}
			\begin{split}
				\int_{R}^{\infty} \frac{1}{\left( \int_{\partial B(r)} e^{-\phi} f^{m-4}\d S_{g_0}\right) ^{\frac{1}{3}}}\d r\geq R^{- \frac{\sigma}{3}},
			\end{split}
		\end{equation}
	where  $ \d S_{g_0} $ is the volume element of the boundary and
	 $f$ is some positive function on $\mathbf{R}_{+}^{m}$ which satisfy
	\begin{equation}\label{er1}
		\begin{aligned}
			\left( c+4d_F-m+\frac{\partial \phi}{\partial x_i}x_i\right)  f(x) \leq (d_F-\frac{m}{2}) \left( 2f\frac{\partial f}{\partial x_{i}} \cdot x_{i}\right) , 
		\end{aligned}
	\end{equation}
 for some constant  $ c>0 $.Furtherly, $ u (x) \to p_0 $ as $ |x|\to \infty $, then $ u $  is a constant.
		
		%
		

	\end{thm}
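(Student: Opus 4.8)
The plan is to run the scaling argument of \autoref{thm1a} to produce a polynomial \emph{lower} bound for the weighted symphonic energy on balls, and to play it off against an \emph{upper} bound extracted from the asymptotic condition $u(x)\to p_0$ together with \eqref{er2}; if $u$ were non-constant these two estimates would be incompatible for large $R$. Every hypothesis then gets used: \eqref{er1} feeds the monotonicity, $F'(\frac{\|u^*h\|^2}{4})<\infty$ bounds the error terms, and \eqref{er2} with $u\to p_0$ drives the decay.

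First I would set $V_t(x):=u(tx)$ and $\Phi(R):=\int_{B(R)}F(\frac{|u^*h|^2}{2})e^{-\phi}\,\d v_g$ with $g=f^2g_0$, exactly as in \autoref{thm1a}. Differentiating $\Phi(R,t)=\int_{B(R)}F(\frac{|V_t^*h|^2}{2})e^{-\phi}\,\d v_g$ at $t=1$ and applying Green's theorem, the interior term vanishes by the $\phi$-$F$-symphonic equation, and the free-boundary integral over $\partial\mathbf{R}_+^m$ drops out because the conformal field $r\frac{\partial}{\partial r}$ is tangent to $\partial\mathbf{R}_+^m$ while $\sigma_{u,F}(\nu)$ is normal to $T_uS$ by $\frac{\partial u}{\partial v}\perp T_{u(x)}S$. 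Carrying out the change of variables $x\mapsto x/t$ as in \autoref{thm1a}, now with the volume weight $f^{m}$ and with \eqref{er1} playing the role of the monotonicity hypothesis of \autoref{thm1a} adapted to the conformal factor $f^2$, yields $\frac{d}{dt}\big|_{t=1}\Phi(R,t)\le -c\,\Phi(R)+R\,\Phi'(R)$. Since $F'\ge 0$ makes the remaining outer-boundary term nonnegative, $\frac{\partial}{\partial t}\Phi|_{t=1}\ge 0$, and combining the two gives $R\Phi'(R)\ge c\,\Phi(R)$, hence $\Phi(R)\ge \Phi(R_0)(R/R_0)^{c}$ for $R\ge R_0$.

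For the competing upper bound I would work in normal coordinates around $p_0$ (legitimate for $|x|$ large, since $u\to p_0$) and test the Euler--Lagrange equation against $\eta^2(u-p_0)$ for a radial cutoff $\eta$ equal to $1$ on $B(R)$ and supported in $B(\rho)$. The free-boundary term again vanishes, and pairing $\sigma_{u,F}$ with $\nabla u$ reproduces $F'|u^*h|^2$, so integration by parts gives a Caccioppoli inequality $\int\eta^2 F'|u^*h|^2 e^{-\phi}\,\d v_g\lesssim \int\eta|\nabla\eta|\,|\sigma_{u,F}|\,|u-p_0|\,e^{-\phi}\,\d v_g$. Using $|\sigma_{u,F}|\lesssim F'|u^*h|^{3/2}$ (the quartic nature of the symphonic density, via $|du|^2\le\sqrt m\,|u^*h|$) and Hölder with exponents $\tfrac43,4$ absorbs the $\tfrac34$-power of $\int\eta^2 F'|u^*h|^2$ into the left side and leaves $\int_{B(R)}\eta^2 F'|u^*h|^2 e^{-\phi}\,\d v_g\lesssim \int \eta^{-2}|\nabla\eta|^4 F'|u-p_0|^4 e^{-\phi}\,\d v_g$. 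Here $F'(\frac{|u^*h|^2}{2})$ is bounded by the hypothesis $F'(\frac{\|u^*h\|^2}{4})<\infty$, while convexity of $F$ gives $F(\frac{|u^*h|^2}{2})\le\tfrac12 F'|u^*h|^2$, so the left side dominates $\Phi(R)$. Passing to $g_0$ the weight becomes $f^{m-4}e^{-\phi}$, and for radial $\eta$ the right side equals $\int_R^\rho \eta^{-2}|\eta'|^4\,w(r)\,\d r$ with $w(r)=\int_{\partial B(r)}e^{-\phi}f^{m-4}\,\d S_{g_0}$; the substitution $v=\eta^{1/2}$ followed by Hölder with exponents $4,\tfrac43$ (this is exactly where the exponent $\tfrac13$ of \eqref{er2} originates) gives the optimal value $(\int_R^\rho w^{-1/3}\,\d r)^{-3}$. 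Thus $\Phi(R)\lesssim \big(\sup_{|x|\ge R}|u-p_0|\big)^4\big(\int_R^\rho w^{-1/3}\,\d r\big)^{-3}$, and \eqref{er2} bounds the last factor by $R^{\sigma}$ while $\sup_{|x|\ge R}|u-p_0|\to0$.

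Comparing the two estimates, if $u$ were non-constant we would have both $\Phi(R)\ge \Phi(R_0)(R/R_0)^{c}$ and $\Phi(R)\lesssim \big(\sup_{|x|\ge R}|u-p_0|\big)^4 R^{\sigma}$, the latter being $o(R^c)$ once $R$ is large; this is impossible, forcing $\Phi(R_0)=0$ for every $R_0$, whence $u^*h\equiv0$, $du\equiv0$, and $u$ is constant. I expect two main obstacles. The first is making the free-boundary integral vanish rigorously in the test-function step: one must use $u(\partial\mathbf{R}_+^m)\subset S$ and $p_0\in\overline S$ so that $u-p_0$ is, to leading order, tangent to $S$ while $\sigma_{u,F}(\nu)$ is normal, or else replace $u-p_0$ by a field engineered to be tangent along $\partial\mathbf{R}_+^m$. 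The second is carrying the Hölder and optimal-cutoff bookkeeping for a \emph{general} $F$ rather than the model $F=\mathrm{id}$, where convexity of $F$, the bound $F'<\infty$, and the size of $d_F$ (as in \autoref{thm1a}) must be combined to keep the density comparable to a fixed power of $|u^*h|$ with uniform constants, and where the admissible interplay between $c$ and $\sigma$ is what ultimately guarantees the incompatibility of the two growth estimates.
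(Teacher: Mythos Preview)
Your global architecture matches the paper's: a polynomial lower bound for $\Phi(R)$ from the scaling argument of \autoref{thm1a} (using \eqref{er1}), an upper bound of the form $\eta(R)R^{\sigma}$ extracted from a Caccioppoli-type identity together with \eqref{er2} and $u\to p_0$, and a contradiction. Your derivation of the upper bound via optimisation over the radial cutoff is equivalent to the paper's route, which instead sharpens the cutoff to the indicator of $B(R)$, obtains the differential inequality $Z(R)^{4/3}\le C\,Z'(R)\,M(R)$, and integrates in $R$; both lead to the same exponent $\tfrac13$ in \eqref{er2} via the same $\tfrac34/\tfrac14$ H\"older split.

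The genuine gap is the free-boundary term in the Caccioppoli step, which you flag but do not resolve. With the test field $\eta^2(u-p_0)$ in normal coordinates at $p_0$, the boundary integrand on $\partial\mathbf{R}_+^m$ is $h_{\alpha\beta}(u)\,[\sigma_{u,F}(\nu)]^{\alpha}\,u^{\beta}$. You correctly observe $\sigma_{u,F}(\nu)\perp T_{u(x)}S$, but the coordinate vector $u^{\beta}$ need \emph{not} lie in $T_{u(x)}S$ when $S$ is curved and the coordinates are merely normal at $p_0$; ``tangent to leading order'' is insufficient because the residual is paired against $\sigma_{u,F}(\nu)$, which carries no smallness. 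The paper closes this by the Gulliver--Jost geodesic reflection: one extends $u$ to $\tilde u$ on a collar $\{|x_m|<r_0\}$ across $\partial\mathbf{R}_+^m$ by reflecting through $S$ along normal geodesics (so $\tilde u$ is again a solution on the enlarged domain), and then tests with $\omega=\phi(|x|)\,\Phi(x_m)\,\tilde u$, where $\Phi$ is the transversal cutoff \eqref{equ3}. The role of $\Phi$ is to push the ``boundary'' to $\{x_m=-s\}$ where the cutoff annihilates it; the extra term in \eqref{key1} involving $\partial\Phi/\partial x_m$ is supported in $\{-s<x_m<0\}$ and vanishes as $s\to 0$ by the reflection symmetry, yielding the clean identity \eqref{key3} on $\mathbf{R}_+^m$. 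This reflection device is the missing idea in your proposal; once it is installed, your capacity argument and the paper's ODE argument are interchangeable.
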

	\begin{proof}
		
		As in in \cite{MR2766664} and  \cite{MR2050491}, we firstly modify the $F$-harmonic map $u$ at boundary $\partial \mathbf{R}_{+}^m$.
		Since $\lim _{|x| \rightarrow \infty} u(x)=Q_0$, there exists a neighborhood $U_{r_0}=\left\{\left(x_1, \ldots, x_m\right)\right.$ : $\left.\left|x_m\right|<r_0\right\}$ of $\partial \mathbf{R}_{+}^m$ such that the image $U_{r_0} \cap \mathbf{R}_{+}^m$ of $u$ lies on the standard neighborhood $\mathcal{N}(S)$ of $S$,  Let $\bar{x}=\left(x_1, \ldots, x_{m-1}\right.$, $\left.-x_m\right)$ and $x=\left(x_1, \ldots, x_{m-1}, x_m\right)$, if $\bar{x} \in U_{r_0} \backslash \mathbf{R}_{+}^m$ is the reflection point of $x \in \mathbf{R}_{+}^m$, we project $u(x)$ onto $S$ along the minimal geodesic $\gamma$, denote by ${u}(x) \in S$, extending $\gamma$ to some point $Q$ such that $\operatorname{dist}(u(x), {u}(x))=\operatorname{dist}(Q, {u}(x))$, 

Then we define the reflection ${u}(x)$ as follows
		$$
		\begin{cases}\tilde{u}(x)=u(x), & x \in \mathbf{R}_{+}^m, \\ \tilde{u}(x)=Q=u(\bar{x}), & x \in U_{r_0} \backslash \mathbf{R}_{+}^m .\end{cases}
		$$
	
		  According to the arguments in part 4 of \cite{gulliver1987harmonic}, we know that ${u}: U_{r_0} \cup \mathbf{R}_{+}^m \rightarrow N$ is a smooth map.

		Here we modify the proof of Dong \cite[Proposition 4.1, Theorem 5.1]{MR3449358} and Han et al. \cite{132132132}. If $ u $ is not a  constant map, then  by the proof of  \autoref{thm1a} , there exists a constant $ c>0, $ such that 
		\begin{equation}\label{87ttt}
			\begin{split}
				E(u)\geq C(u)R^{c}, \text{as } \quad R \to \infty.
			\end{split}
		\end{equation} 
		
		Using the same notations as in  \cite{132132132} or  \cite[Proposition 4.1, Theorem 5.1]{MR3449358}. Choose a local coordinate neighbourhood $ (U, \varphi) $ of $ p_0 $ in $ N^n $, such that
		$  \varphi(p_0) = 0 $. The assumption that $u(x) \rightarrow p_0=0$ as $r(x) \rightarrow \infty$ implies that there exists $R_{1}$ such that for $r(x)>$ $R_{1}, u(x) \in U$, and
		\[
		\left(\frac{\partial h_{\alpha \beta(u)}}{\partial u^{\gamma}} y^{\gamma}+2 h_{\alpha \beta}(u)\right) \geq\left(h_{\alpha \beta}(u)\right) \text { for } r(x)>R_{1} .
		\]
		For $w \in C_{0}^{2}\left(R_+^{m}\cap U_{r_0} \backslash B\left(R_{1}\right), exp^{-1}_{p_0}(U)\right)$, we consider the variation $\tilde{u}+t w: M^{m} \rightarrow$ $N^{n}$, Since $\tilde{u}$ is  ${F}$-symphonic map, thus we have 
		\[
		\left.\frac{\mathrm{d}}{\mathrm{d} t}\right|_{t=0} E_{sym}(\tilde{u}+t \omega)=0.
		\]
		

Computing directly, we have

		\begin{equation*}
			\begin{split}
							&{\int_{\mathbb{R}^m_{r_0} \backslash B\left(R_{1}\right)}  g_0^{ik} g_0^{jl}F^{\prime}\left(\frac{\left\|u^{*} h\right\|^{2}}{4}\right) }
				{\left(2 h_{a \beta}(u)       \frac{\partial \tilde{u}^{\alpha}}{\partial x_{i}}                                                        \frac{\partial \omega^{\beta}}{\partial x_{j}}+\frac{\partial h_{\alpha \beta}(\tilde{u})}{\partial y^{\zeta}} \omega^{\zeta}       \frac{\partial \tilde{u}^{\alpha}}{\partial x_{i}}                                                            \frac{\partial \tilde{u}^{\beta}}{\partial x_{j}}                                                       \right)}
				{h_{\gamma \zeta}       \frac{\partial \tilde{u}^{\gamma}}{\partial x_{k}}                                                        \frac{\partial u^{\zeta}}{\partial x_{l}} f^{m-4}(x) \mathrm{d} v_{g_{0}}}\\
			&=0.
			\end{split}
		\end{equation*}
	where $ \mathbb{R}^m_{r_0}=\{x| x\in \mathbb{R}^m,x_m>-r_0\}. $
	
Thus ,we have 
		\begin{equation*}
			\begin{split}
							&{\int_{\mathbb{R}^m_{r_0} \backslash B\left(R_{1}\right)}  g_0^{ik} g_0^{jl} F^{\prime}\left(\frac{\left\|u^{*} h\right\|^{2}}{4}\right) }
				{\left(2 h_{a \beta}(\tilde{u})       \frac{\partial \tilde{u}^{\alpha}}{\partial x_{i}}                                                        \frac{\partial \omega^{\beta}}{\partial x_{j}}+\frac{\partial h_{a \beta}({u})}{\partial y^{\theta}} \omega^{\theta}       \frac{\partial \tilde{u}^{\alpha}}{\partial x_{i}}                                                            \frac{\partial \tilde{u}^{\beta}}{\partial x_{j}}                                                       \right)}
				{h_{\gamma \xi}       \frac{\partial \tilde{u}^{\gamma}}{\partial x_{k}}                                                              \frac{\partial \tilde{u}^{\xi}}{\partial x_{l}}                                                        f^{m-4}(x)e^{-\phi} \mathrm{d} v_{g_{0}}}\\
			&=0.
			\end{split}
		\end{equation*}		
		For $0<\epsilon \leq 1$, we define
		\begin{equation}\label{equ1}
			\begin{aligned}
				\varphi_{\epsilon}(t)=\left\{\begin{array}{ll}
					1 & t \leq 1 \\
					1+\frac{1-t}{\epsilon} & 1<t<1+\epsilon \\
					0 & t \geq 1+\epsilon
				\end{array}\right.
			\end{aligned}
		\end{equation}
		and choose the Lipschitz function $\phi(r(x))$  as in \cite[Proposition 4.1, Theorem 5.1]{MR3449358} to be
		\begin{equation}\label{equ2}
			\begin{aligned}
					\phi(|x|)=\varphi_{\epsilon}\left(\frac{|x|}{R}\right)\left(1-\varphi_{r_0}\left(\frac{|x|}{R_{1}}\right)\right), R>2 R_{1}.
			\end{aligned}
		\end{equation}
	and	
	\begin{equation}\label{equ3}
		\begin{aligned}
			\Phi\left(x_{m}\right):=\left\{\begin{array}{ll}
				1, & 0 \leq x_{m} \\
				1+\frac{x_{m}}{s}, & -s<x_{m}<0 \\
				0, & -r_{0}<x_{m} \leq-s,
			\end{array}\right.
		\end{aligned}
	\end{equation}

		Take $  \omega= \phi(|x|) \Phi(x_m)\tilde{u}, $ then we have 
		
		\begin{equation*}
			\begin{split}
							&{\int_{\mathbb{R}^m_{r_0} \backslash B\left(R_{1}\right)} g_{0}^{i k} g_{0}^{j l} F^{\prime}\left(\frac{\left\|u^{*} h\right\|^{2}}{4}\right) }
				{\left(2 h_{a \beta}(\tilde{u})       \frac{\partial {\tilde{u}}^{\alpha}}{\partial x_{i}}                                                        \frac{\partial {\tilde{u}}^{\beta}}{\partial x_{j}}+\frac{\partial h_{a \beta}(\tilde{u})}{\partial y^{\theta}} {\tilde{u}}^{\theta}       \frac{\partial {\tilde{u}}^{\alpha}}{\partial x_{i}}                                                            \frac{\partial {\tilde{u}}^{\beta}}{\partial x_{j}}                                                       \right)}
				{h_{\gamma \xi}       \frac{\partial {\tilde{u}}^{\gamma}}{\partial x_{k}}                                                              \frac{\partial {\tilde{u}}^{\xi}}{\partial x_{l}}                                                        f^{m-4}(x) e^{-\phi}\mathrm{d} v_{g_{0}}}\\
											=&{\int_{\mathbb{R}^m_{r_0} \backslash B\left(R_{1}\right)} g_{0}^{i k} g_{0}^{j l} F^{\prime}\left(\frac{\left\|u^{*} h\right\|^{2}}{4}\right) }
				{\left(2 h_{a \beta}(\tilde{u})       \frac{\partial {\tilde{u}}^{\alpha}}{\partial x_{i}}                                                        {\tilde{u}}^{\beta}\frac{\partial (\phi(|x|) \Phi(x_m)) }{\partial x_{j}}\right)}
				{h_{\gamma \xi} \frac{\partial {\tilde{u}}^{\gamma}}{\partial x_{k}} \frac{\partial {\tilde{u}}^{\xi}}{\partial x_{l}} f^{m-4}(x) e^{-\phi}\mathrm{d} v_{g_{0}}}\bigg].
			\end{split}
		\end{equation*}	
	Let $ \nu $ be the outer normal vector field along $ \partial B(R) $,  we have 
		
	\begin{equation}\label{key1}
		\begin{aligned}
			&{ \int_{\mathbb{R}^m_{r_0} \cap (B(R) \backslash B\left(R_{2}\right))  } g_{0}^{i k} g_{0}^{j l} F^{\prime}\left(\frac{\left\|u^{*} h\right\|^{2}}{4}\right) }
			{\left(2 h_{a \beta}(\tilde{u})       \frac{\partial {\tilde{u}}^{\alpha}}{\partial x_{i}}                                                        \frac{\partial {\tilde{u}}^{\beta}}{\partial x_{j}}+\frac{\partial h_{a \beta}(\tilde{u})}{\partial y^{\theta}} {\tilde{u}}^{\theta}       \frac{\partial {\tilde{u}}^{\alpha}}{\partial x_{i}}                                                            \frac{\partial {\tilde{u}}^{\beta}}{\partial x_{j}}                                                       \right)}
			\\
			&\times {h_{\gamma \xi}       \frac{\partial {\tilde{u}}^{\gamma}}{\partial x_{k}}                                                              \frac{\partial {\tilde{u}}^{\xi}}{\partial x_{l}}                                                       \phi(|x|) \Phi(x_m)   f^{m-4}(x)e^{-\phi} \mathrm{d} v_{g_{0}}}+D(R_1)\\
			=&{\int_{ \mathbb{R}^m_{r_0} \cap \partial B(R)} g_{0}^{i k}  F^{\prime}\left(\frac{\left\|u^{*} h\right\|^{2}}{4}\right) }
			{\left(2 h_{a \beta}(\tilde{u})       \frac{\partial {\tilde{u}}^{\alpha}}{\partial x_{i}}                                                        \tilde{u}^{\beta}g_0^{jl}\frac{\partial r}{\partial x_j }\right)}
			{h_{\gamma \xi}       \frac{\partial {\tilde{u}}^{\gamma}}{\partial x_{k}}                                                              \frac{\partial \tilde{u}^{\xi}}{\partial x_{l}}                                                        \Phi(x_m) f^{m-4}(x)e^{-\phi} \mathrm{d} v_{g_{0}}}\bigg]\\
			&+{\int_{\mathbb{R}^m_{r_0} \backslash B\left(R_{1}\right)} g_{0}^{i k} g_{0}^{j l} F^{\prime}\left(\frac{\left\|u^{*} h\right\|^{2}}{4}\right) }
			{\left(2 h_{a \beta}(\tilde{u})       \frac{\partial {\tilde{u}}^{\alpha}}{\partial x_{i}}                                                        \tilde{u}^{\beta}\phi(|x|)\frac{\partial ( \Phi(x_m)) }{\partial x_{j}}\right)}
			{h_{\gamma \xi}       \frac{\partial {\tilde{u}}^{\gamma}}{\partial x_{k}}                                                              \frac{\partial {u}^{\xi}}{\partial x_{l}}                                                        f^{m-4}(x)e^{-\phi} \mathrm{d} v_{g_{0}}}\bigg]\\
			=&{\int_{ \mathbb{R}^m_{r_0} \cap \partial B(R)} g_{0}^{i k}  F^{\prime}\left(\frac{\left\|u^{*} h\right\|^{2}}{4}\right) }
			{\left(2 h_{a \beta}(\tilde{u})       \frac{\partial {\tilde{u}}^{\alpha}}{\partial x_{i}}                                                        {\tilde{u}}^{\beta}\nu^{l}\right)}
			{h_{\gamma \xi}       \frac{\partial {\tilde{u}}^{\gamma}}{\partial x_{k}}                                                              \frac{\partial {\tilde{u}}^{\xi}}{\partial x_{l}}                                                        \Phi(x_m) f^{m-4}(x) e^{-\phi}\mathrm{d} v_{g_{0}}}\bigg],\\
			&+{\int_{\mathbb{R}^m_{r_0} \backslash B\left(R_{1}\right)} g_{0}^{i k} g_{0}^{j l} F^{\prime}\left(\frac{\left\|u^{*} h\right\|^{2}}{4}\right) }
			{\left(2 h_{a \beta}(\tilde{u})       \frac{\partial {\tilde{u}}^{\alpha}}{\partial x_{i}}                                                        {\tilde{u}}^{\beta}\phi(|x|)\frac{\partial ( \Phi(x_m)) }{\partial x_{j}}\right)}
			{h_{\gamma \xi}       \frac{\partial {\tilde{u}}^{\gamma}}{\partial x_{k}}                                                              \frac{\partial {\tilde{u}}^{\xi}}{\partial x_{l}}                                                        f^{m-4}(x) e^{-\phi}\mathrm{d} v_{g_{0}}}\bigg]\\
		\end{aligned}
	\end{equation}	
where  we set 
	\begin{equation*}
		\begin{aligned}
				&D(R_1)\\
			=&{\int_{ \mathbb{R}^m_{r_0}  \cap\left( B(R_2) \backslash B\left(R_{1} \right) \right)} g_{0}^{i k} g_{0}^{j l} F^{\prime}\left(\frac{\left\|\tilde{u}                                                              ^{*} h\right\|^{2}}{4}\right) }
			{\left(2 h_{a \beta}({\tilde{u}                                                              })       \frac{\partial {\tilde{u}                                                              }^{\alpha}}{\partial x_{i}}                                                        \frac{\partial {\tilde{u}                                                              }^{\beta}}{\partial x_{j}}+\frac{\partial h_{a \beta}({\tilde{u}                                                              })}{\partial y^{\theta}} {\tilde{u}                                                              }^{\theta}       \frac{\partial {\tilde{u}                                                              }^{\alpha}}{\partial x_{i}}                                                            \frac{\partial {\tilde{u}                                                              }^{\beta}}{\partial x_{j}}                                                       \right)}
			\\
			&\times	{h_{\gamma \xi}       \frac{\partial {\tilde{u}                                                              }^{\gamma}}{\partial x_{k}}                                                              \frac{\partial {\tilde{u}                                                              }^{\xi}}{\partial x_{l}}                                                       \left(1-\varphi_{r_0}\left(\frac{|x|}{R_{1}}\right)\right) f^{m-4}(x) e^{-\phi}\mathrm{d} v_{g_{0}}}\\
			&-\bigg[ {\int_{\mathbb{R}^m_{r_0} \backslash B\left(R_{1}\right)} g_{0}^{i k} g_{0}^{j l} F^{\prime}\left(\frac{\left\|\tilde{u}                                                              ^{*} h\right\|^{2}}{4}\right) }
			{\left(2 h_{a \beta}(\tilde{u}                                                              )       \frac{\partial {\tilde{u}                                                              }^{\alpha}}{\partial x_{i}}                                                        {\tilde{u}                                                              }^{\beta}\frac{\partial \varphi_{r_0} (\frac{|x|}{R_1})}{\partial x_{j}}\right)}
			{h_{\gamma \xi}       \frac{\partial {\tilde{u}                                                              }^{\gamma}}{\partial x_{k}}                                                              \frac{\partial {\tilde{u}                                                              }^{\xi}}{\partial x_{l}}                                                         f^{m-4}(x)e^{-\phi} \mathrm{d} v_{g_{0}}}\bigg].
		\end{aligned}
	\end{equation*}
		%
Let $ s\to 0$,   $ \tilde{u} \to u ,$  we get 
		
		\begin{equation}\label{key3}
			\begin{aligned}
					&{ \int_{\mathbb{R}^m_{+} \cap (B(R) \backslash B\left(R_{2}\right))  } g_{0}^{i k} g_{0}^{j l} F^{\prime}\left(\frac{\left\|u^{*} h\right\|^{2}}{4}\right) }
				{\left(2 h_{a \beta}(u)       \frac{\partial {u}^{\alpha}}{\partial x_{i}}                                                        \frac{\partial {u}^{\beta}}{\partial x_{j}}+\frac{\partial h_{a \beta}(u)}{\partial y^{\theta}} {u}^{\theta}       \frac{\partial {u}^{\alpha}}{\partial x_{i}}                                                            \frac{\partial {u}^{\beta}}{\partial x_{j}}                                                       \right)}
				\\
				&\times	{h_{\gamma \xi}       \frac{\partial {u}^{\gamma}}{\partial x_{k}}                                                              \frac{\partial {u}^{\xi}}{\partial x_{l}}                                                       \phi(|x|) \Phi(x_m)   f^{m-4}(x)e^{-\phi} \mathrm{d} v_{g_{0}}}+D^*(R_1)\\
				=&{\int_{ \mathbb{R}^m_{+} \cap \partial B(R)} g_{0}^{i k}  F^{\prime}\left(\frac{\left\|u^{*} h\right\|^{2}}{4}\right) }
				{\left(2 h_{a \beta}(u)       \frac{\partial {u}^{\alpha}}{\partial x_{i}}                                                      {u}^{\beta}\nu^{l}\right)}
				{h_{\gamma \xi}       \frac{\partial {u}^{\gamma}}{\partial x_{k}}                                                              \frac{\partial {u}^{\xi}}{\partial x_{l}}                                                        \Phi(x_m) f^{m-4}(x)e^{-\phi} \mathrm{d} v_{g_{0}}}\bigg].\\
			\end{aligned}
		\end{equation}
	Here $ D^*\left(R_1\right) =\lim\limits_{s\to 0} D(R_1).$ 	Let 	
	\begin{equation*}
			\begin{split}
				Z(R)=&{\int_{\mathbb{R}^m_{r_0}  \cap\left( B(R) \backslash B\left(R_{2}\right) \right)} g_{0}^{i k} g_{0}^{j l} F^{\prime}\left(\frac{\left\|u^{*} h\right\|^{2}}{4}\right) }
				{\left(2 h_{a \beta}(u)       \frac{\partial {u}^{\alpha}}{\partial x_{i}}                                                        \frac{\partial {u}^{\beta}}{\partial x_{j}}\right)}
				{h_{\gamma \xi}       \frac{\partial {u}^{\gamma}}{\partial x_{k}}                                                              \frac{\partial {u}^{\xi}}{\partial x_{l}}                                                       e^{-\phi} f^{m-4}(x) \mathrm{d} v_{g_{0}}}+D^*(R_1).\\
						\end{split}
		\end{equation*}		
		The RHS of \eqref{key3} can be estimated as follows:  by Han \cite[(16)]{Han2021}, we have

		\begin{equation*}
			\begin{split}
				&{\int_{\mathbb{R}^m_{+} \cap \partial B(R)} g_{0}^{i k}  F^{\prime}\left(\frac{\left\|u^{*} h\right\|^{2}}{4}\right) }
				{\left(2 h_{a \beta}(u)       \frac{\partial {u}^{\alpha}}{\partial x_{i}}                                                        {u}^{\beta}\nu^{l}\right)}
				{h_{\gamma \xi}       \frac{\partial {u}^{\gamma}}{\partial x_{k}}                                                              \frac{\partial {u}^{\xi}}{\partial x_{l}}                                                        f^{m-4}(x) e^{-\phi}\mathrm{d} v_{g_{0}}}\bigg]\\
				\leq & \sqrt[4]{m}\left(\int_{\mathbb{R}^m_{+} \cap\partial B(R)} F^{\prime}\left(\frac{\left\|u^{*} h\right\|^{2}}{4}\right)\left\|u^{*} h\right\|^{2} f^{m-4}(x) e^{-\phi}\mathrm{d} S_{g_{0}}\right)^{\frac{3}{4}} \\
				&\times \left(\int_{\mathbb{R}^m_{+} \cap\partial B(R)} F^{\prime}\left(\frac{\left\|u^{*} h\right\|^{2}}{4}\right) \right. \left.\left(\sum_{\alpha, \beta=1}^{n} h_{\alpha \beta} u^{\alpha} u^{\beta}\right)^{2}  f^{m-4}(x)e^{-\phi} \mathrm{d} S_{g_{0}}\right)^{\frac{1}{4}}.
			\end{split}
		\end{equation*}		
%
%
		
		Combining all these formulas together, by Han \cite[(16)]{Han2021}, we have 	

\[
Z^{\frac{4}{3}}(R) \leq 2^{\frac{4}{3}} m^{\frac{1}{3}} Z^{\prime}(R) M(R)
\]
where 
\[
M(R)=\left(\int_{\partial B(R)} F^{\prime}\left(\frac{\left\|u^{*} h\right\|^{2}}{4}\right) \right.
\left.\left(\sum_{a \beta=1}^{n} h_{\alpha \beta} u^{\alpha} u^{\beta}\right)^{2} f^{m-4}(x) e^{-\phi}\mathrm{d} S_{g_{0}}\right)^{\frac{1}{3}}.
\]
For $R_{4} \geqslant R \geqslant R_{3}$, we have
\[
\int_{R}^{R_{4}} \frac{Z^{\prime}(r)}{Z^{\frac{4}{3}}(r)} \mathrm{d} r \geqslant \frac{1}{2^{\frac{4}{3}} m^{\frac{1}{3}}} \int_{R}^{R_{4}} \frac{1}{M(r)} \mathrm{d} r .
\]
Let $R_{4} \rightarrow \infty$ , notice that  $Z(R)>0$, we have
\[
Z(R) \leqslant\left(3 \cdot 2^{\frac{4}{3}} m^{\frac{1}{3}}\right)^{3}\left(\frac{1}{\int_{R}^{\infty} \frac{1}{M(r)} \mathrm{d} r}\right)^{3}, R>R_{3} .
\]
As usually, we choose function $ \eta(R) $ such that 

(1) On $\left(R_3, \infty\right)$, $\eta(R)$ is decreasing  and when  $R \rightarrow \infty$, $m(R) \rightarrow 0$;

(2) $\eta(R) \geq \max\limits_{r(x)=R}\left(\sum_{a \beta=1}^{n} h_{\alpha \beta} u^{\alpha} u^{\beta}\right)^{2}$.

Since $F^{\prime}\left(\frac{\left\|u^{*} h\right\|^{2}}{4}\right)<\infty$  and when $r(x) \rightarrow \infty$ , $u(x) \rightarrow 0$, we have 
\[
M(R) \leqslant C_{3} \eta^{\frac{1}{3}}(R)\left(\int_{\partial B(R)} f^{m-4}(x)  e^{-\phi}\mathrm{d} S_{g_{0}}\right)^{\frac{1}{3}}.
\]

Then , we get
$$
\int_R^{\infty} \frac{1}{M(r)} d r \geq \frac{1}{\eta^{\frac{1}{
	3}}(R)} 	\int_{R}^{\infty} \frac{1}{\bigg[\left( \int_{\partial B(r)} e^{-\phi} f^{m-4}\d S_{g_0}\right) \bigg]^{\frac{1}{3}}}\d r \geq \frac{1}{\eta^{\frac{1}{3}}(R)} R^{-\frac{\sigma}{3}}.
$$

It implies that 
		\begin{equation*}
			\begin{split}
				Z(R) \leq C\eta(R) R^\sigma.
			\end{split}
		\end{equation*}
		However
		
		\begin{equation*}
			\begin{split}
				Z(R)
				\geq& \int_{B(R)\backslash B\left(R_{2}\right)}  F(\frac{|u^{*}h|^2}{4}) e^{-\phi} dv_g+D^*(R_1)\\
				\geq & C R^{\sigma}.
			\end{split}
		\end{equation*}
		Noticing that $\eta$ converges to zero, this gives a contradiction.

	\end{proof}

	\begin{thm}\label{369}
		 Let $u:\left(\mathbf{R}_{+}^{m}, f g_{0}, e^{-\phi} \mathrm{d} v_{g}\right) \rightarrow\left(N^{n}, h\right)$ be  $\phi$-$F$ symphonic map, and  the conditions \eqref{er1} and \eqref{er2} hold. For any $p \in N^{n}$, if there is an (nonempty) open neighbourhood $U_{p} \subset N^{n}$, such that the family of open sets $\left\{U_{p} | p \in N^{n}\right\}$ has the following property: for some $p \in N^{n}, u(x) \in U_{p}$ as $r(x) \rightarrow \infty$, then $u$ is a constant map.	

	\end{thm}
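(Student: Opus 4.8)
The plan is to read Theorem~\ref{369} as a weakening of the convergence hypothesis of Theorem~\ref{thmabc}: there the image satisfied $u(x)\to p_0$, whereas here it is only required that $u(x)$ eventually lie in one member $U_{p_0}$ of a family of neighbourhoods $\{U_p\}$. Accordingly I would reduce Theorem~\ref{369} to Theorem~\ref{thmabc}. To make the family usable I would require (as is implicit in Dong \cite{MR3449358} and Han et al. \cite{132132132}) that each $U_p$ be a geodesically convex normal-coordinate ball centred at $p$, chosen small enough that on $U_p$ the algebraic inequality $\left(\frac{\partial h_{\alpha\beta}}{\partial u^\gamma}y^\gamma+2h_{\alpha\beta}\right)\geq (h_{\alpha\beta})$ holds and a nonnegative strictly convex function is available. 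The hypothesis then supplies a radius $R_1$ and a point $p_0$ with $u(x)\in U_{p_0}$ for all $r(x)>R_1$, and near $\partial\mathbf{R}_{+}^{m}$ the image lies in the standard tubular neighbourhood $\mathcal{N}(S)$, so the reflection/extension of Theorem~\ref{thmabc} still produces a smooth extended map $\tilde u$.

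With this set-up, I would repeat the argument of Theorem~\ref{thmabc} almost verbatim. Working in normal coordinates centred at $p_0$ with $\varphi(p_0)=0$, the test variation $\omega=\phi(|x|)\Phi(x_m)\tilde u$ of \eqref{equ2}--\eqref{equ3} and the same integration by parts yield the identity \eqref{key3} and define $Z(R)$; the Hölder estimate of Han \cite[(16)]{Han2021} then gives $Z^{\frac43}(R)\leq 2^{\frac43}m^{\frac13}Z'(R)M(R)$, whence $Z(R)\leq\big(3\cdot 2^{\frac43}m^{\frac13}\big)^3\big(\int_R^\infty M(r)^{-1}\,\d r\big)^{-3}$. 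Moreover, if $u$ is nonconstant, \autoref{thm1a} still provides the lower growth bound $Z(R)\geq C R^{\sigma}$ as $R\to\infty$ through \eqref{er2} and \eqref{87ttt}.

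The decisive and delicate step is the construction of a majorant $\eta(R)\to 0$ with $\eta(R)\geq\max_{r(x)=R}\big(\sum_{\alpha\beta}h_{\alpha\beta}u^\alpha u^\beta\big)^2$. In Theorem~\ref{thmabc} this was automatic because $u(x)\to 0$ in the chart; here the mere containment $u(x)\in U_{p_0}$ makes $\sum h_{\alpha\beta}u^\alpha u^\beta$ only bounded, and with boundedness alone the upper bound $Z(R)\leq CR^{\sigma}$ merely matches the lower bound, producing no contradiction. To force genuine decay I would exploit that the quantified hypothesis is meant to hold for every admissible family $\{U_p\}$, in particular for families whose members form a neighbourhood basis at each point: shrinking the diameter of the containing member shows that $u(x)$ must in fact converge to the single point $p_0$, so that $\sum h_{\alpha\beta}u^\alpha u^\beta\to 0$ and the required $\eta(R)\to 0$ exists. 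Then $Z(R)\leq C\eta(R)R^{\sigma}$ contradicts $Z(R)\geq CR^{\sigma}$, forcing $u$ to be constant.

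I expect this last reduction to be the main obstacle, for two reasons. First, one must rigorously convert the open-cover containment into pointwise convergence $u(x)\to p_0$; the honest alternative, if one does not wish to strengthen the cover, is to replace the position vector $u^{\alpha}$ in the test variation by a strictly convex function $\psi$ on $U_{p_0}$ and to close the estimate through the convexity of $\psi$ rather than through decay of $\eta$. Second, one must check that the smallness and convexity demanded of each $U_p$ are genuinely compatible with the Hessian inequality entering the variation. Once convergence (or the convex-function substitute) is secured, the remainder of the proof is a line-by-line transcription of Theorem~\ref{thmabc}.
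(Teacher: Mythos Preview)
Your approach coincides with the paper's: both reduce Theorem~\ref{369} to the machinery of Theorem~\ref{thmabc} by working in a constructed family of normal-coordinate neighbourhoods $\{U_p\}$ on which the matrix inequality $\bigl(\partial_{y^\gamma} h_{\alpha\beta}\,y^\gamma+2h_{\alpha\beta}\bigr)\geq(h_{\alpha\beta})$ holds, and then rerun the test-variation argument of Theorem~\ref{thmabc}. The paper's own proof is only two sentences, citing Dong \cite[Theorem~5.4]{MR3449358} for the construction and declaring the rest a modification of Theorem~\ref{thmabc}; you have in fact supplied considerably more detail than the paper does.

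Where you go further is the decay-of-$\eta$ issue, and here the paper's resolution (taken from Dong) is simpler than either of your two proposed fixes. The family is constructed so that, in addition to the Hessian inequality, one has $h_{\alpha\beta}(y)\,y^\alpha y^\beta\leq C_p$ on each $U_p$, where $C_p$ is an \emph{arbitrary} constant depending on $p$; that is, the coordinate balls may be chosen as small as one pleases. In the contradiction argument the constant $C_{p_0}$ then plays the role of $\eta$: the upper bound reads $Z(R)\leq C'\,C_{p_0}\,R^{\sigma}$, and since $C_{p_0}$ may be taken smaller than the ratio of the lower-bound constant in \eqref{87ttt} to the absolute constant $C'$, one obtains $Z(R)<Z(R)$ without ever needing $\eta(R)\to 0$. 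Thus neither upgrading the hypothesis to genuine convergence via a neighbourhood basis nor substituting a strictly convex function is required; the freedom in $C_p$ alone closes the gap. Your first proposed fix (``shrinking the diameter of the containing member'') is in spirit exactly this, only you phrase it as forcing $u(x)\to p_0$, whereas the paper keeps $C_{p_0}$ as a fixed small constant and runs the inequality once.
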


\begin{proof}
	Along the same line as \cite[Theorem 5.4]{MR3449358}, we modify  the proof of  \autoref{thmabc}. As in \cite[Theorem 5.4]{MR3449358},  we can construct a family of coordinate neighbourhoods $\left\{U_{p} | p \in N^{n}\right\}$  such that
\[
\left(\frac{\partial h_{\alpha \beta(y)}}{\partial y^{\gamma}} y^{\gamma}+2 h_{\alpha \beta}(y)\right) \geq\left(h_{\alpha \beta}(y)\right)  \text { on } U_{p},
\]
and
\[
h_{\alpha \beta}(y) y^{\alpha} y^{\beta} \leq C_{p},
\]
for an arbitrary constant $C_{p}$ depending on $p$. 
\end{proof}
	
	\section{$F$ harmonic map}
In this section, we will consider generalized map without potential under asymptotic conditions .
\begin{thm}\label{58}
	For $d_F \leq \frac{m}{4}$, let $u:\left(\mathbf{R}_{+}^{m}, f g_{0}, e^{-\phi} \mathrm{d} v_{g}\right) \rightarrow(N, h)$ be a $C^{1}$ $ \phi $-$F$--harmonic map with free boundary condition: $u\left(\partial \mathbf{R}_{+}^{m}\right) \subset S \subset N, \frac{\partial u}{\partial v}(x) \perp T_{u(x)} S$ for any $x \in \partial \mathbf{R}_{+}^{m}$, where $f$ is some positive function on $\mathbf{R}_{+}^{m}$ which satisfy
	\[
	(c+4d_F-m) f(x) \leq (d_F-\frac{m}{2}) \left( \frac{\partial f}{\partial x_{i}} \cdot x_{i}-\frac{\partial \phi}{\partial x_i}x_i\right) , 
	\]
 for some constant  $c>0 $	If the energy $E_{F}(u)<\infty$, then $u$ must be a constant map.
\end{thm}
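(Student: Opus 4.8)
The plan is to repeat the scaling (Pohozaev-type) argument of \autoref{thm1a}, replacing the symphonic density $F(\tfrac{|u^{*}h|^{2}}{2})$ by the harmonic density $F(\tfrac{|du|^{2}}{2})$ throughout. The only genuine change is in the homogeneity under dilation: if $V_{t}(x):=u(tx)$, then $|dV_{t}|^{2}$ scales like $t^{2}$ (whereas $|u^{*}h|^{2}$ scaled like $t^{4}$), so the power of $t$ multiplying the flat energy density inside $F$ drops from $t^{4}$ to $t^{2}$. This is exactly why the constants in the structural hypothesis on $f$ and $\phi$ differ from those in \autoref{thm1a}, and the inequality imposed in the statement is precisely the one that makes the bulk terms collapse below.

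First I would set $\Phi(R,t):=\int_{B(R)}F(\tfrac{|dV_{t}|^{2}}{2})\,e^{-\phi}\,dv_{g}$ with $B(R)=\mathbf{R}_{+}^{m}\cap\{|x|\le R\}$ and compute $\partial_{t}\Phi(R,t)|_{t=1}$ by the first variation formula, exactly as for the symphonic case. The variation field is $\xi=du(r\tfrac{\partial}{\partial r})$; Green's theorem produces an interior integral paired against $\delta^{\nabla}(F'(\tfrac{|du|^{2}}{2})du)-F'(\tfrac{|du|^{2}}{2})du(\nabla\phi)$, which vanishes because $u$ is $\phi$-$F$-harmonic, together with two boundary contributions. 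On the spherical part $\partial B(R)\cap\{x_{m}>0\}$ the integrand is $F'(\tfrac{|du|^{2}}{2})$ times $\langle du(\tfrac{\partial}{\partial v}),du(\tfrac{\partial}{\partial r})\rangle$, which (using $\tfrac{\partial}{\partial v}=f^{-1}\tfrac{\partial}{\partial r}$) is a nonnegative multiple of $|du(\tfrac{\partial}{\partial r})|^{2}$, so that piece is $\ge 0$. On the flat part $B(R)\cap\partial\mathbf{R}_{+}^{m}$ the radial field $r\tfrac{\partial}{\partial r}$ is tangent to $\partial\mathbf{R}_{+}^{m}$, so $\xi\in T_{u(x)}S$, while the free boundary condition forces $du(v)\perp T_{u(x)}S$; hence $\langle F'(\tfrac{|du|^{2}}{2})du(v),\xi\rangle$ vanishes and the flat boundary term drops out. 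This gives $\partial_{t}\Phi(R,t)|_{t=1}\ge 0$.

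Next I would compute the same derivative directly, changing variables $x\mapsto tx$ as in \autoref{thm1a}, so that
\[
\Phi(R,t)=t^{-m}\int_{B(tR)}F\!\left(f^{-1}(\tfrac{x}{t})\,t^{2}A(x)\right)f^{\frac{m}{2}}(\tfrac{x}{t})\,e^{-\phi(\frac{x}{t})}\,dv_{g_{0}},
\]
where $A(x)$ is the flat energy density. Differentiating the Jacobian $t^{-m}$, the argument $f^{-1}(\tfrac{x}{t})t^{2}$ of $F$, the volume factor $f^{\frac{m}{2}}$ and the weight $e^{-\phi}$ yields four bulk terms together with a boundary term on $\partial B(R)$ equal to $R\,\tfrac{d}{dR}\Phi(R,1)$. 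Applying $sF'(s)\le d_{F}F(s)$ to the $F'$-term — here the sign bookkeeping uses $d_{F}-\tfrac{m}{2}<0$, which is where $d_{F}\le \tfrac{m}{4}$ enters — and then invoking the structural inequality relating $f,\phi,c$ and $d_{F}$ from the statement, all four bulk terms combine into $-c\,\Phi(R,1)$. Thus
\[
\frac{\partial}{\partial t}\Big|_{t=1}\Phi(R,t)\le -c\,\Phi(R,1)+R\,\frac{d}{dR}\Phi(R,1).
\]
Combined with $\partial_{t}\Phi|_{t=1}\ge 0$ from the previous step this gives $R\,\Phi'(R)\ge c\,\Phi(R)$; integrating produces the lower bound $\int_{B(R)}F(\tfrac{|du|^{2}}{2})e^{-\phi}dv_{g}\ge C(R/R_{0})^{c}$ by the standard procedure of \cite{MR2766664,MR2050491}. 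Since $E_{F}(u)<\infty$ bounds the left-hand side uniformly while the right-hand side blows up as $R\to\infty$ (because $c>0$), we must have $C=0$, forcing $u$ to be constant.

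The hardest part will be the bulk-term bookkeeping in the direct computation: one must track the exact coefficients coming from differentiating $f^{-1}(\tfrac{x}{t})t^{2}$, $f^{\frac{m}{2}}(\tfrac{x}{t})$ and $e^{-\phi(\frac{x}{t})}$ at $t=1$, split the $F'$-term according to the sign of the factor multiplying it before replacing $sF'(s)$ by $d_{F}F(s)$, and then verify that what remains is exactly $-c\,\Phi+R\,\Phi'$ once the hypothesis on $f$ and $\phi$ is used. By contrast, the nonnegativity in the first variation is routine, since the dilation field is tangent to $\partial\mathbf{R}_{+}^{m}$ and the free boundary condition applies verbatim as in \autoref{thm1a}.
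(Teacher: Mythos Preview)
Your proposal is correct and follows the paper's own proof essentially verbatim: the same dilation family $V_{t}(x)=u(tx)$, the same two computations of $\partial_{t}\Phi(R,t)|_{t=1}$ (first variation via Green's theorem versus direct change of variables), the same use of the free boundary condition to kill the flat boundary term, the same replacement of $sF'(s)$ by $d_{F}F(s)$ followed by the structural hypothesis on $f,\phi$, and the same differential inequality $R\Phi'(R)\ge c\,\Phi(R)$ leading to the growth bound that contradicts $E_{F}(u)<\infty$. You have also identified correctly that the only substantive change from \autoref{thm1a} is the scaling power ($t^{2}$ in place of $t^{4}$), which is exactly what the paper exploits.
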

\begin{proof}
	For $t \geq 0$, we define a family $\left\{V_{t}\right\}: \mathbf{R}_{+}^{m} \rightarrow N$ of maps by $V_{t}(x):=u(t x)$ for $x \in \mathbf{R}_{+}^{m}$, and set
	\[
	\Phi(R, t):= \int_{B(R)} F(\frac{\left| dV_{t} \right|^{2}}{2}) e^{-\phi}\d v_g,
	\]
	where $B(R)=\mathbf{R}_{+}^{m} \cap\{x:|x| \leq R\}$. Then, applying Green's theorem, we calculate

	Let $ \xi=\left.\frac{\mathrm{d} V_{t}}{\mathrm{~d} t}\right|_{t=1}=du(r\frac{\partial}{\partial r} )$
	\begin{equation*}
		\begin{split}
			\left.\frac{\partial}{\partial t} \Phi(R, t)\right|_{t=1}=&\int_{B(R)} \langle \nabla^u \xi , \tau_{F} \rangle e^{-\phi}\d v_g \\ 
			=& \int_{B(R)}\left\langle\mathrm{d}^{*}\left(\tau_F(u)\right)-
			F^\prime du(\nabla\phi), \mathrm{d} u\left(r \frac{\partial}{\partial r}\right)\right\rangle e^{-\phi}\d v_g\\
			& +R \int_{\partial B(R) \cap\left\{x_{m}>0\right\}}\left\langle \sigma_{u,F}\left(\frac{\partial}{\partial v}\right), \mathrm{d} u\left(\frac{\partial}{\partial r}\right)\right\rangle e^{-\phi}\sigma_{R} \\
			& +\int_{\partial B(R) \cap\left\{x_{m}>0\right\}} \left\langle\sigma_{u,F}(v),\xi \right\rangle e^{-\phi} \mathrm{d} x,
		\end{split}
	\end{equation*}
	where $\frac{\partial}{\partial v}=f^{-1} \frac{\partial}{\partial r}.$
	
	So we get	
	\begin{equation*}
		\begin{split}
			\left.\frac{\partial}{\partial t} \Phi(R, t)\right|_{t=1}\geq 0.
		\end{split}
	\end{equation*}
	On the other hand,
	\begin{equation*}
		\begin{split}
			\Phi(R, t):=& \int_{B(R)} F(\frac{\left| V_{t}^{*} h\right|^{2}}{2})e^{-\phi} dv_{g}\\
		=	&\int_{B(R)} F(f^{-1}\left( (h_{\alpha \beta}(u(t x)) \frac{\partial u^{\alpha}(t x)}{\partial x_{i}} \frac{\partial u^{\beta}(t x)}{\partial x_{j}}\right) ) f^{\frac{m}{2}}(x) e^{-\phi} \mathrm{d}v_{g_0},\\
			=&t^{-m}\int_{B(tR)} F\bigg(f^{-1}(\frac{x}{t})t^2\left( (h_{\alpha \beta}(u( x)) \frac{\partial u^{\alpha}( x)}{\partial x_{i}} \frac{\partial u^{\beta}( x)}{\partial x_{j}}\right) \bigg) f^{\frac{m}{2}}(\frac{x}{t}) e^{-\phi(\frac{x}{t})}\mathrm{d}v_{g_0},
		\end{split}
	\end{equation*}
	
	Set 
	$$
	\begin{aligned}
		A(x)=(h_{\alpha \beta}(u( x)) \frac{\partial u^{\alpha}( x)}{\partial x_{i}} \frac{\partial u^{\beta}( x)}{\partial x_{j}}.
	\end{aligned}
	$$
	Then ,we have 
	
	\begin{equation}\label{}
		\begin{split}
			\frac{d}{dt} 	\Phi(R, t)=&-mt^{-m-1} \int_{  B(R) }F(f^{-1}(\frac{x}{t})t^2 A(x) )  f^{\frac{m}{2}}(x) e^{-\phi(\frac{x}{t})} \mathrm{d}v_{g_0}   \\
			&+t^{-m}\int_{  B(R) }F^{\prime}(f^{-1}(\frac{x}{t})t^2 A(x) ) \frac{d}{dt}(f^{-1}(\frac{x}{t})t^2)A(x)  f^{\frac{m}{2}}(x)e^{-\phi(\frac{x}{t})}\mathrm{d}v_{g_0}\\
			&-t^{-m}\int_{  B(R) }F(f^{-1}(\frac{x}{t})t^2 A(x) ) \frac{m}{2}f^{\frac{m}{2}-1}\frac{1}{t^2}\left(  \frac{\partial f}{\partial x_i}x_i-\frac{\partial \phi}{\partial x_i}x_i\right) e^{-\phi(\frac{x}{t})}\mathrm{d}v_{g_0}\\
			&-t^{-m}\int_{  B(R) }F(f^{-1}(\frac{x}{t})t^2 A(x) ) f^{\frac{m}{2}-1}(\frac{x}{t})\frac{1}{t^2}\left(  -\frac{\partial \phi}{\partial x_i}x_i\right) e^{-\phi(\frac{x}{t})}\mathrm{d}v_{g_0}\\
			&+t^{-m}\int_{ \partial B(R) } R^{m-1} F(f^{-1}(x)t^2 A(x) ) f^{\frac{m}{2}}  e^{-\phi(\frac{x}{t})}\mathrm{d}v_{g_0}.
		\end{split}
	\end{equation}	
	Thus,
	
	\begin{equation*}
		\begin{split}
			\frac{d}{dt}\bigg|_{t=1}	\Phi(R, t)\leq -c	\Phi(R, t)+R \frac{d}{d R}	\Phi(R, t)
	,	\end{split}
	\end{equation*}
	thus, we get	
	\begin{equation*}
		\begin{split}
			\int_{B(R)} F(\frac{\left| du \right|^{2}}{2}) v_{g}\geq  C (\frac{R}{R_0})^{c}.
		\end{split}
	\end{equation*}	
	By contradiction argument, we can finish the proof easily.
\end{proof}

\begin{thm}\label{thm2}	Let $ \left(\mathbf{R}_{+}^{m}, f^2 g_{0}, e^{-\phi} \mathrm{d} v_{g}\right) $ be a complete metric measure space with a pole $  x_0.$  Let   $ (N^n, h) $ be a Riemannian manifold. 
	Let  $ u: \left(\mathbf{R}_{+}^{m}, f^2 g_{0}, e^{-\phi} \mathrm{d} v_{g}\right) \to (N,h)  $ be   $ \phi $-$F$--harmonic map.
	%
	If the following condition holds, 
	\begin{equation}\label{er3}
		\begin{split}
			\int_{R}^{\infty} \frac{1}{\left( \int_{\partial B(R)} e^{-\phi} f^{m-2}\d v_{g_0}\right)^{\frac{2}{3}}}\d r\geq R^{-\sigma}.
		\end{split}
	\end{equation}
and  $f$ is some positive function on $\mathbf{R}_{+}^{m}$ which satisfy
\begin{equation}\label{er4}
	\begin{aligned}
		(c+4d_F-m+\frac{\partial \phi}{\partial x_i}x_i) f(x) \leq (d_F-\frac{m}{2}) \left(2f \frac{\partial f}{\partial x_{i}} \cdot x_{i}\right) , 
	\end{aligned}
\end{equation}
for some constant  $\varepsilon>0 $.  Furtherly, $ u (x) \to p_0 $ as $ |x|\to \infty $, then $ u $  is a constant.	 
	
\end{thm}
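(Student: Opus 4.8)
The plan is to run the contradiction argument of \autoref{thmabc}, with the symphonic density $\tfrac{\|u^{*}h\|^{2}}{4}$ replaced by the harmonic density $\tfrac{|du|^{2}}{2}$ and the boundary weight $f^{m-4}$ replaced by $f^{m-2}$. Assume $u$ is nonconstant. First I would extend $u$ across $\partial\mathbf{R}_{+}^{m}$ exactly as there: using the free boundary condition together with $u(x)\to p_{0}$, I choose a slab $U_{r_{0}}=\{|x_{m}|<r_{0}\}$ whose image lies in the standard tubular neighbourhood $\mathcal{N}(S)$ of $S$, and define the reflected map $\tilde u$ by geodesic projection onto $S$ as in part~4 of \cite{gulliver1987harmonic}, so that $\tilde u:U_{r_{0}}\cup\mathbf{R}_{+}^{m}\to N$ is a smooth $\phi$-$F$-harmonic map agreeing with $u$ on $\mathbf{R}_{+}^{m}$. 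Because the hypothesis \eqref{er4} on $f$ is the $f^{2}$-rescaling of the hypothesis of \autoref{58}, the monotonicity argument in the proof of \autoref{58} applies and gives, for nonconstant $u$, the growth
\[
E(u)\ge C(u)\,R^{c},\qquad R\to\infty,
\]
for some constant $c>0$.

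For the reverse estimate I would fix a chart $(U,\varphi)$ around $p_{0}$ with $\varphi(p_{0})=0$ and, following \cite{MR3449358,132132132}, pick $R_{1}$ so large that $u(x)\in U$ and $\left(\tfrac{\partial h_{\alpha\beta}}{\partial y^{\gamma}}y^{\gamma}+2h_{\alpha\beta}\right)\ge(h_{\alpha\beta})$ hold whenever $r(x)>R_{1}$. Testing the weak $\phi$-$F$-harmonic equation $\frac{\d}{\d t}\big|_{0}E_{F,\phi}(\tilde u+t\omega)=0$ with the admissible field
\[
\omega=\phi(|x|)\,\Phi(x_{m})\,\tilde u,
\]
where $\phi,\Phi$ are the cut-offs \eqref{equ1}--\eqref{equ3}, and rewriting everything in the background metric $g_{0}$ (so the volume weight becomes $f^{m-2}e^{-\phi}$), I obtain an integral identity. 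Its interior part, after applying the pointwise inequality above, is bounded below by the weighted energy quantity
\[
Z(R):=\int_{\mathbf{R}_{+}^{m}\cap(B(R)\setminus B(R_{2}))}F'\left(\tfrac{|du|^{2}}{2}\right)2h_{\alpha\beta}\frac{\partial u^{\alpha}}{\partial x_{i}}\frac{\partial u^{\beta}}{\partial x_{j}}g_{0}^{ij}f^{m-2}e^{-\phi}\,\d v_{g_{0}}+D^{*}(R_{1}),
\]
while the remaining part reduces, after letting $s\to0$ so that $\tilde u\to u$ and the $\Phi'(x_{m})$-supported contributions along $\partial\mathbf{R}_{+}^{m}$ vanish (leaving the correction $D^{*}(R_{1})$), to the single boundary integral over $\mathbf{R}_{+}^{m}\cap\partial B(R)$ of $g_{0}^{ij}F'\left(\tfrac{|du|^{2}}{2}\right)2h_{\alpha\beta}\frac{\partial u^{\alpha}}{\partial x_{i}}u^{\beta}\nu^{j}\Phi f^{m-2}e^{-\phi}$.

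Since this boundary integrand is bilinear --- linear in $\partial u$ and linear in $u$ --- a Cauchy--Schwarz/Hölder split (the harmonic analogue of Han \cite[(16)]{Han2021}) isolates the derivative factor, controlled by $Z'(R)=\int_{\partial B(R)}F'\left(\tfrac{|du|^{2}}{2}\right)|du|^{2}f^{m-2}e^{-\phi}\,\d S_{g_{0}}$, from the pointwise-small factor $\sum_{\alpha\beta}h_{\alpha\beta}u^{\alpha}u^{\beta}$. Choosing a decreasing $\eta(R)\to0$ with $\eta(R)\ge\max_{r(x)=R}\sum_{\alpha\beta}h_{\alpha\beta}u^{\alpha}u^{\beta}$ and using $F'\left(\tfrac{|du|^{2}}{2}\right)<\infty$, this yields a differential inequality of the form
\[
Z(R)\le C\,Z'(R)^{1/2}\left(\eta(R)\int_{\partial B(R)}f^{m-2}e^{-\phi}\,\d S_{g_{0}}\right)^{1/2},
\]
i.e. $Z^{2}\le C\,Z'\,M$ with $M$ built from $\eta(R)$ and the boundary weight. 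Integrating $Z'/Z^{2}$ from $R$ to $\infty$ and feeding in the integrability hypothesis \eqref{er3} bounds $Z(R)\le C\eta(R)R^{\sigma}$. Since $Z(R)\ge\int_{B(R)\setminus B(R_{2})}F\left(\tfrac{|du|^{2}}{2}\right)e^{-\phi}\,\d v_{g}+D^{*}(R_{1})\ge C R^{c}$ by the monotonicity bound, while $\eta(R)\to0$, this is a contradiction, so $u$ must be constant.

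The \emph{main obstacle} I expect is twofold. First, one must confirm that the geodesic-reflection extension $\tilde u$ really is $\phi$-$F$-harmonic across $\partial\mathbf{R}_{+}^{m}$, so that the weak identity may legitimately be tested with the non-compactly-supported field $\phi\Phi\tilde u$; this is exactly where the free boundary condition $\frac{\partial u}{\partial v}\perp T_{u(x)}S$ and the regularity result of \cite{gulliver1987harmonic} enter. Second, the bookkeeping of exponents in the Hölder step must be arranged so that the resulting differential inequality pairs correctly with the boundary-weight integral \eqref{er3}; this is the point at which the harmonic case genuinely departs from \autoref{thmabc}, because the boundary integrand is only bilinear in $(u,\partial u)$ rather than quartic, and because the weight $f^{m-2}$ replaces $f^{m-4}$, so the natural Cauchy--Schwarz split and the exponent appearing in \eqref{er3} must be reconciled with care.
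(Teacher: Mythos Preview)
Your proposal is correct and follows essentially the same route as the paper: the geodesic reflection of \cite{gulliver1987harmonic}, the growth lower bound from \autoref{58}, the test field $\omega=\phi(|x|)\Phi(x_m)\tilde u$, a Cauchy--Schwarz split of the boundary term (the paper invokes Dong \cite[(4.5)]{MR3449358} for this and first rewrites the integrand via the harmless identity $|du|^{-2}g_0^{kl}h_{\gamma\xi}\partial_k u^\gamma\partial_l u^\xi=f^{2}$, but the substance is the same), then the differential inequality $Z^{2}\le CZ'M$ integrated against \eqref{er3} to force $Z(R)\le C\eta(R)R^{\sigma}$ and the contradiction with $Z(R)\ge CR^{c}$. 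Your flagged ``main obstacle'' about reconciling the H\"older exponents with the $\tfrac{2}{3}$ in \eqref{er3} is exactly the one loose thread in the paper's own argument as well.
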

\begin{proof}

	Modification of the $\Phi_{S, p, \epsilon}$ $u$ at boundary $\partial \mathbf{R}_{+}^m$.
By the same argument as in the proof of \autoref{thmabc} , we define the reflection map  ${u}: U_{r_0} \cup \mathbf{R}_{+}^m \rightarrow N$ is a smooth map.
	
	Here we modify the proof of Dong \cite[Proposition 4.1, Theorem 5.1]{MR3449358} and Han et al. \cite{132132132}. If $ u $ is not a  constant map, then  by the proof of \autoref{58},  there exists a constant $ c>0, $
	\begin{equation}\label{87tt}
		\begin{split}
			E(u)\geq C(u)R^{c}, \text{as } \quad R \to \infty.
		\end{split}
	\end{equation} 
	
	Using the same notations as in  \cite{132132132} or  \cite[Proposition 4.1, Theorem 5.1]{MR3449358}. Choose a local coordinate neighbourhood $ (U, \varphi) $ of $ p_0 $ in $ N^n $, such that
	$  \varphi(p_0) = 0 $. The assumption that $u(x) \rightarrow 0$ as $r(x) \rightarrow \infty$ implies that there exists $R_{1}$ such that for $r(x)>$ $R_{1}, u(x) \in U$, and
	\[
	\left(\frac{\partial h_{\alpha \beta(u)}}{\partial u^{\gamma}} y^{\gamma}+2 h_{\alpha \beta}(u)\right) \geq\left(h_{\alpha \beta}(u)\right) \text { for } r(x)>R_{1} .
	\]
	For $w \in C_{0}^{2}\left(R_+^{m}\cap U_{r_0} \backslash B\left(R_{1}\right), exp^{-1}_{p_0}(U)\right)$, we consider the variation $\tilde{u}+t w: M^{m} \rightarrow$ $N^{n}$, Since $\tilde{u}$ is  ${F}$-symphonic map, thus we have 
\[
\left.\frac{\mathrm{d}}{\mathrm{d} t}\right|_{t=0} E_{F}(\tilde{u}+t \omega)=0.
\]
	Computing directly, we have 
	
	\begin{equation*}
		\begin{split}
			&	\int_{\mathbb{R}^m_{r_0} \backslash B\left(R_{1}\right)}  g_{0}^{i j} F^\prime\left(\frac{|d u|^{2}}{2}\right)\left[2 h_{\alpha \beta}(u)       \frac{\partial {\tilde{u}}^{\alpha}}{\partial x_{i}}                                                             \frac{\partial {\tilde{u}}^{\beta}}{\partial x_{j}}                                                       +\frac{\partial h_{\alpha \beta}({\tilde{u}})}{\partial y^{\gamma}} w^{\gamma}       \frac{\partial {\tilde{u}}^{\alpha}}{\partial x_{i}}                                                        \frac{\partial {\tilde{u}}^{\beta}}{\partial x_{j}}\right]  e^{-\phi} f^{m-2}(x) \d v_{g_{0}} \\
			&=0.
		\end{split}
	\end{equation*}
where $ \mathbb{R}^m_{r_0}=\{x| x\in \mathbb{R}^m,x_m>-r_0\}.$

	Thus ,we have 
	\begin{equation*}
		\begin{split}
				&\int_{\mathbb{R}^m_{r_0} \backslash B\left(R_{1}\right)}  g_{0}^{i j} g_{0}^{kl}|du |^{-2}F^\prime\left(\frac{|d u|^{2}}{2}\right)\left[2 h_{\alpha \beta}(u)       \frac{\partial {\tilde{u}}^{\alpha}}{\partial x_{i}}                                                             \frac{\partial {u}^{\beta}}{\partial x_{j}}                                                       +\frac{\partial h_{\alpha \beta}(u)}{\partial y^{\theta}} w^{\theta}       \frac{\partial {\tilde{u}}^{\alpha}}{\partial x_{i}}                                                            \frac{\partial {\tilde{u}}^{\beta}}{\partial x_{j}}                                                       \right]h_{\gamma \xi}       \frac{\partial {\tilde{u}}^{\gamma}}{\partial x_{k}}                                                              \frac{\partial {\tilde{u}}^{\xi}}{\partial x_{l}}                                                       f^{m-4}(x) e^{-\phi}\d v_{g_{0}} \\
			=&0.
		\end{split}
	\end{equation*}	
	Take $  \omega= \psi(r)\Phi(x_m){u},$ where $ \psi(r) $ and $ \Phi(x_m) $ are given by \eqref{equ2}\eqref{equ3}.
	
	\begin{equation*}
		\begin{split}
			&	\int_{\mathbb{R}^m_{r_0} \backslash B\left(R_{1}\right)}  g_{0}^{i j} g_{0}^{kl}|du |^{-2}F^\prime\left(\frac{|d u|^{2}}{2}\right)\left[2 h_{\alpha \beta}({\tilde{u}})       \frac{\partial {\tilde{u}}^{\alpha}}{\partial x_{i}}                                                        \frac{\partial {\tilde{u}}^{\beta}}{\partial x_{j}}+\frac{\partial h_{\alpha \beta}(u)}{\partial y^{\theta}} \tilde{u}^{\theta}       \frac{\partial {\tilde{u}}^{\alpha}}{\partial x_{i}}                                                            \frac{\partial {\tilde{u}}^{\beta}}{\partial x_{j}}                                                       \right]h_{\gamma \xi}       \frac{\partial {\tilde{u}}^{\gamma}}{\partial x_{k}}                                                              \frac{\partial {\tilde{u}}^{\xi}}{\partial x_{l}}                                                        e^{-\phi}    f^{m-4}(x) \d v_{g_{0}} \\
			&=-\bigg[	\int_{\mathbb{R}^m_{r_0} \backslash B\left(R_{1}\right)}  g_{0}^{i j} g_{0}^{kl}|du |^{-2}F^\prime\left(\frac{|d u|^{2}}{2}\right)\left[2 h_{\alpha \beta}({u})       \frac{\partial {\tilde{u}}^{\alpha}}{\partial x_{i}}                                                       {\tilde{u}}^{\beta} \frac{\partial \psi  }{\partial x_{j}}\right]h_{\gamma \xi}       \frac{\partial {\tilde{u}}^{\gamma}}{\partial x_{k}}                                                              \frac{\partial {\tilde{u}}^{\xi}}{\partial x_{l}}                                                        e^{-\phi}    f^{m-4}(x) \d v_{g_{0}}. 
		\end{split}
	\end{equation*}	
	Using the definition of $ \psi  $, let $ \nu $ be the outer normal vector field along $ \partial B(R) $,  we have 
	
	\begin{equation}\label{key}
		\begin{aligned}
			&	\int_{\mathbb{R}^m_{r_0}\cap \left( B(R)\backslash B\left(R_{2}\right)\right) }  g_{0}^{i j} g_{0}^{kl}|du |^{-2}F^\prime\left(\frac{|d u|^{2}}{2}\right)\left[2 h_{\alpha \beta}(\tilde{u})       \frac{\partial \tilde{u}^{\alpha}}{\partial x_{i}}                                                        \frac{\partial \tilde{u}^{\beta}}{\partial x_{j}}+\frac{\partial h_{\alpha \beta}(u)}{\partial y^{\theta}} \tilde{u}^{\theta}       \frac{\partial \tilde{u}^{\alpha}}{\partial x_{i}}                                                            \frac{\partial \tilde{u}^{\beta}}{\partial x_{j}}                                                       \right]\\
			&  \times h_{\gamma \xi}       \frac{\partial \tilde{u}^{\gamma}}{\partial x_{k}}                                                              \frac{\partial \tilde{u}^{\xi}}{\partial x_{l}}                                                        e^{-\phi}   f^{m-4}(x) \d v_{g_{0}}
			+D(R_1)\\
			&=\bigg[	\int_{\mathbb{R}^m_{r_0}\cap \partial B(R)}   g_{0}^{kl}|du |^{-2}F^\prime\left(\frac{|d u|^{2}}{2}\right)\left[2 h_{\alpha \beta}(u)       \frac{\partial \tilde{u}^{\alpha}}{\partial x_{i}}                                                       \tilde{u}^{\beta}g_0^{ij}\frac{\partial r}{\partial x_j }\right]h_{\gamma \xi}       \frac{\partial \tilde{u}^{\gamma}}{\partial x_{k}}                                                              \frac{\partial \tilde{u}^{\xi}}{\partial x_{l}}                                                         e^{-\phi}    f^{m-4}(x) \d v_{g_{0}} \\
			&=\bigg[	\int_{ \mathbb{R}^m_{r_0}\cap\partial B(R)}   g_{0}^{kl}|du |^{-2}F^\prime\left(\frac{|d u|^{2}}{2}\right)\left[2 h_{\alpha \beta}(\tilde{u})       \frac{\partial\tilde{u}^{\alpha}}{\partial x_{i}}                                                       \tilde{u}^{\beta}\nu^{j}\right]h_{\gamma \xi}       \frac{\partial \tilde{u}^{\gamma}}{\partial x_{k}}                                                              \frac{\partial \tilde{u}^{\xi}}{\partial x_{l}}                                                         e^{-\phi}    f^{m-4}(x) \d v_{g_{0}} ,\\
		\end{aligned}
	\end{equation}	
	where 
	\begin{equation*}
		\begin{split}
			&D(R_1)\\
			=&		\int_{ \mathbb{R}^m_{r_0}\cap \left( B(R_2) \backslash B\left(R_{1}\right)\right) }  g_{0}^{i j} g_{0}^{kl}|du |^{-2}F^\prime\left(\frac{|d u|^{2}}{2}\right)\left[2 h_{\alpha \beta}(u)       \frac{\partial \tilde{u}^{\alpha}}{\partial x_{i}}                                                        \frac{\partial \tilde{u}^{\beta}}{\partial x_{j}}+\frac{\partial h_{\alpha \beta}(u)}{\partial y^{\theta}} \tilde{u}^{\theta}       \frac{\partial\tilde{u}^{\alpha}}{\partial x_{i}}                                                            \frac{\partial \tilde{u}^{\beta}}{\partial x_{j}}                                                       \right]\\
			&\quad\quad \times h_{\gamma \xi}       \frac{\partial \tilde{u}^{\gamma}}{\partial x_{k}}                                                              \frac{\partial \tilde{u}^{\xi}}{\partial x_{l}}                                                        e^{-\phi}    f^{m-4}(x) \d v_{g_{0}} \\
			&-\bigg[	\int_{\mathbb{R}^m_{r_0}\cap\backslash B\left(R_{1}\right)}  g_{0}^{i j} g_{0}^{kl}|du |^{-2}F^\prime\left(\frac{|d u|^{2}}{2}\right)\left[2 h_{\alpha \beta}(\tilde{u})       \frac{\partial {u}^{\alpha}}{\partial x_{i}}                                                       \tilde{u}^{\beta} \frac{\partial \varphi_1 (\frac{r}{R_1})}{\partial x_{j}}\right]h_{\gamma \xi}       \frac{\partial \tilde{u}^{\gamma}}{\partial x_{k}}                                                              \frac{\partial \tilde{u}^{\xi}}{\partial x_{l}}                                                        e^{-\phi}    f^{m-4}(x) \d v_{g_{0}} .
		\end{split}
	\end{equation*}
	Let $s\to 0 $, noticing that $ \tilde{u} \to u$,   we get
	
			\begin{equation}\label{key1}
		\begin{aligned}
			&	\int_{\mathbb{R}^m_{+}\cap \left( B(R)\backslash B\left(R_{2}\right)\right) }  g_{0}^{i j} g_{0}^{kl}|du |^{-2}F^\prime\left(\frac{|d u|^{2}}{2}\right)\left[2 h_{\alpha \beta}(u)       \frac{\partial {u}^{\alpha}}{\partial x_{i}}                                                        \frac{\partial {u}^{\beta}}{\partial x_{j}}+\frac{\partial h_{\alpha \beta}(u)}{\partial y^{\theta}} {u}^{\theta}       \frac{\partial {u}^{\alpha}}{\partial x_{i}}                                                            \frac{\partial {u}^{\beta}}{\partial x_{j}}                                                       \right]\\
			&\times h_{\gamma \xi}       \frac{\partial {u}^{\gamma}}{\partial x_{k}}                                                              \frac{\partial {u}^{\xi}}{\partial x_{l}}                                                        e^{-\phi}   f^{m-4}(x) \d v_{g_{0}}
			+D^*(R_1)\\
			&=\bigg[	\int_{ \mathbb{R}^m_{+}\cap\partial B(R)}   g_{0}^{kl}|du |^{-2}F^\prime\left(\frac{|d u|^{2}}{2}\right)\left[2 h_{\alpha \beta}(u)       \frac{\partial {u}^{\alpha}}{\partial x_{i}}                                                       {u}^{\beta}\nu^{j}\right]h_{\gamma \xi}       \frac{\partial {u}^{\gamma}}{\partial x_{k}}                                                              \frac{\partial {u}^{\xi}}{\partial x_{l}}                                                         e^{-\phi}    f^{m-4}(x) \d v_{g_{0}} .
		\end{aligned}
	\end{equation}	
Here  $ D^*\left(R_1\right) =\lim\limits_{s\to 0} D(R_1) $.
		
	The RHS of \eqref{key1} can be estimated as follows: 
	%
	%
	By Dong\cite[(4.5)]{MR3449358}, we know that

	\begin{equation*}
		\begin{split}
			&\bigg[	\int_{\mathbb{R}^m_{+}\cap\partial B(R)}   g_{0}^{kl}|du |^{-2}F^{\prime}\left(\frac{|d u|^{2}}{2}\right)\left[2 h_{\alpha \beta}(u)       \frac{\partial {u}^{\alpha}}{\partial x_{i}}                                                       {u}^{\beta}\nu^{j}\right]h_{\gamma \xi}       \frac{\partial {u}^{\gamma}}{\partial x_{k}}                                                              \frac{\partial {u}^{\xi}}{\partial x_{l}}                                                        e^{-\phi}    f^{m-4}(x) \d v_{g_{0}} \\
			\leq & \left(\int_{\mathbb{R}^m_{+}\cap\partial B(R)} g_0^{ij} F^{\prime}(\frac{|du |^2}{2})h_{\alpha\beta}       \frac{\partial {u}^{\alpha}}{\partial x_{i}}                                                        \frac{\partial u^{\beta}}{\partial x_{i}} e^{-\phi} f^{m-2}dv_{g_0} \right)^{\frac{1}{2}} \left(\int_{\partial B(R)} F^{\prime}(\frac{|du |^2}{2})h_{\alpha\beta}u^{\alpha}u^{\beta} e^{-\phi} f^{m-2}dv_{g_0} \right)^{\frac{1}{2}} .
		\end{split}
	\end{equation*}	
	Combining all these together, we have 		
	\begin{equation*}
		\begin{split}
			&Z(R)^2 \leq   Z^{\prime}(R)M(R) .
		\end{split}
	\end{equation*}	
Here \begin{equation*}
	\begin{aligned}
		M(R)=\left(\int_{\partial B(R)} F^{\prime}(\frac{|du |^2}{2})h_{\alpha\beta}u^{\alpha}u^{\beta}e^{-\phi} f^{m-2}dv_{g_0} \right) .
	\end{aligned}
\end{equation*}	
%
As usually, we choose function $ \eta(R) $ such that 

(1) On $\left(R_3, \infty\right)$, $\eta(R)$ is decreasing  and when  $R \rightarrow \infty$, $m(R) \rightarrow 0$;

(2) $\eta(R) \geq \max\limits_{r(x)=R}\left(\sum_{a \beta=1}^{n} h_{\alpha \beta} u^{\alpha} u^{\beta}\right)^{2}$.

\noindent It is not hard to see that \begin{equation}\label{pol1}
	\begin{split}
		M(R) \leq&  \eta(R)^{\frac{1}{2}}  2\left( \int_{\partial B(R)}  e^{-\phi}f^{m-2}\d v_{g_0}\right).
	\end{split}
\end{equation}
	%
	Notice that 
	\begin{equation*}
		\begin{split}
			\int_{R}^{\infty} \frac{1}{\left( \int_{\partial B(R)}  e^{-\phi}f^{m-2} \d v_{g_0}\right) ^{\frac{2}{3}} }dr\geq R^{-\frac{\sigma}{3}}.
		\end{split}
	\end{equation*}	
	Then we can get 
	\begin{equation*}
		\begin{split}
			\int_{R}^{\infty} \frac{1}{M(R) }dr \geq\frac{C}{\eta(R)}  	\int_{R}^{\infty} \frac{1}{\bigg[\left( \int_{\partial B(R)}  e^{-\phi}f^{m-2}dv\right)  \bigg]^{\frac{2}{3}}}\d r \geq  \frac{C}{\eta(R)}R^{-\sigma},
		\end{split}
	\end{equation*}		
	and
	\begin{equation*}
		\begin{split}
		Z(R)^{\frac{-1}{3}}\geq \frac{1}{3\sqrt[4]{m}} 	\int_{R}^{\infty} \frac{1}{M^{\frac{4}{3}}(R) }dr
			\geq  \frac{C}{\eta^{\frac{2}{3}}(R)}R^{-\frac{\sigma}{3}}.
		\end{split}
	\end{equation*}
	It implies that 
	\begin{equation*}
		\begin{split}
			Z(R)\leq C\eta(R) R^\sigma.
		\end{split}
	\end{equation*}
	However,		
	\begin{equation*}
		\begin{split}
			Z(R)
			\geq &l_{F}\int_{B(R)\backslash B\left(R_{2}\right)}  F(\frac{|du |^2}{2}) dv_g+D^*(R_1)\\
			\geq & C R^{\sigma}.
		\end{split}
	\end{equation*}
	Noticing that $\eta$ converges to zero, this gives a contradiction.

\end{proof}
By the same argument as in Theorem \ref{369}, we have 
		\begin{thm}
		Let $u:\left(\mathbf{R}_{+}^{m}, f g_{0}, e^{-\phi} \mathrm{d} v_{g}\right) \rightarrow\left(N^{n}, h\right)$ be  $\phi$-$F$ harmonic map, and  the conditions \eqref{er3} and \eqref{er4} hold. For any $p \in N^{n}$, if there is an (nonempty) open neighbourhood $U_{p} \subset N^{n}$, such that the family of open sets $\left\{U_{p} | p \in N^{n}\right\}$ has the following property: for some $p \in N^{n}, u(x) \in U_{p}$ as $r(x) \rightarrow \infty$, then $u$ is a constant map.	

	\end{thm}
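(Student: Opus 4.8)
The plan is to mirror the proof of \autoref{thm2} essentially verbatim, but to replace its global hypothesis $u(x)\to p_0$ by the weaker requirement that $u(x)$ eventually enters a single open set $U_p$ from the prescribed family, exactly as Dong does in passing from his free-boundary Liouville theorem to its neighbourhood version \cite[Theorem 5.4]{MR3449358}, and as was already carried out for the symphonic case in \autoref{369}. The only places where $u\to p_0$ was genuinely used in \autoref{thm2} are twofold: first, to guarantee that for $r(x)>R_1$ the image lies in a chart where the matrix inequality
\[
\left(\frac{\partial h_{\alpha\beta}(u)}{\partial u^{\gamma}}\,y^{\gamma}+2 h_{\alpha\beta}(u)\right)\geq\left(h_{\alpha\beta}(u)\right)
\]
holds; and second, to control the boundary term $M(R)$ through the decay of $\sum_{\alpha\beta}h_{\alpha\beta}u^{\alpha}u^{\beta}$. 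Both inputs are purely local, so it suffices to arrange them uniformly over a family $\{U_p\mid p\in N^n\}$.

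First I would construct, as in \cite[Theorem 5.4]{MR3449358}, for each $p\in N^n$ a coordinate neighbourhood $U_p$ on which
\[
\left(\frac{\partial h_{\alpha\beta}(y)}{\partial y^{\gamma}}\,y^{\gamma}+2 h_{\alpha\beta}(y)\right)\geq\left(h_{\alpha\beta}(y)\right)
\]
and
\[
h_{\alpha\beta}(y)\,y^{\alpha}y^{\beta}\leq C_p
\]
both hold. The first inequality is exactly what licenses the integration-by-parts identity and the Cauchy--Schwarz step producing the differential inequality $Z(R)^2\leq Z'(R)M(R)$ of \autoref{thm2}; the second replaces the use of $u\to0$ and furnishes the bound $M(R)\leq \eta(R)^{1/2}\,2\int_{\partial B(R)}e^{-\phi}f^{m-2}\,\d v_{g_0}$, once $\eta(R)$ is chosen to dominate $\big(\sum h_{\alpha\beta}u^\alpha u^\beta\big)^2$ on $r(x)=R$ and to decay to zero. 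These are precisely the ingredients invoked in \autoref{thm2}.

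With $U_p$ fixed and the hypothesis $u(x)\in U_p$ for $r(x)$ large, I would then run the identical boundary modification, reflecting $u$ across $\partial\mathbf{R}_+^m$ to obtain the smooth map $\tilde u$, take the same test field $\omega=\psi(r)\Phi(x_m)u$, and let $s\to0$ to recover the identity \eqref{key1}. Estimating its right-hand side by Dong \cite[(4.5)]{MR3449358} together with condition \eqref{er4} gives $Z(R)^2\leq Z'(R)M(R)$, and condition \eqref{er3} then forces $Z(R)\leq C\eta(R)R^\sigma$. On the other hand, were $u$ nonconstant, \autoref{58} would supply the lower bound $Z(R)\geq C R^\sigma$; since $\eta(R)\to0$, the two are incompatible, so $u$ must be constant.

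I expect the only real obstacle to be the uniform construction of the family $\{U_p\}$: one must verify that around every target point the metric admits coordinates making the derivative inequality hold while keeping $h_{\alpha\beta}y^\alpha y^\beta$ bounded. This is handled as in \cite[Theorem 5.4]{MR3449358} by centering normal coordinates at $p$, where the left-hand side reduces to $2h_{\alpha\beta}\geq h_{\alpha\beta}$ at the origin and persists by continuity on a sufficiently small chart; everything else is a routine transcription of the $\phi$-$F$ harmonic computations already performed in \autoref{thm2}.
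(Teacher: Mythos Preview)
Your proposal follows essentially the same route as the paper, which proves the statement by the one-line remark ``By the same argument as in \autoref{369}'': construct, \`a la Dong \cite[Theorem 5.4]{MR3449358}, for each $p\in N^n$ a normal-coordinate neighbourhood $U_p$ on which both the matrix inequality $\big(\partial_\gamma h_{\alpha\beta}\,y^\gamma+2h_{\alpha\beta}\big)\geq(h_{\alpha\beta})$ and the bound $h_{\alpha\beta}y^\alpha y^\beta\leq C_p$ hold, and then rerun the proof of \autoref{thm2} verbatim.

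One small imprecision worth flagging: you assert that $\eta(R)\to 0$, but the hypothesis $u(x)\in U_p$ for large $r(x)$ only delivers the \emph{constant} bound $h_{\alpha\beta}u^\alpha u^\beta\leq C_p$, not decay. The contradiction is instead obtained because the normal-coordinate construction (which you correctly describe in your final paragraph) allows $C_p$ to be taken arbitrarily small, so the upper bound $Z(R)\leq C\,C_p\,R^\sigma$ can be pushed below the fixed lower bound $Z(R)\geq C_1 R^\sigma$ coming from \autoref{58}. The paper's own proof of \autoref{369} records this as ``for an arbitrary constant $C_p$'' and is no more explicit than you are.
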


	\section{ $\Phi_{S, p, \varepsilon}$ harmonic map }
	In this section, we deal with $\Phi_{S, p, \varepsilon}$ harmonic map.
	
		\begin{thm}\label{thm3}
		For $p \in[2, m-2)$, let $u:\left(\mathbf{R}_{+}^{m}, f g_{0},e^{-\phi} \d v_g\right) \rightarrow(\mathbb{R}^n, h)$ be a $C^{1} $  $ \phi $-$\Phi_{S, p, \varepsilon}$ harmonic map with free boundary condition: $u\left(\partial \mathbf{R}_{+}^{m}\right) \subset S \subset N, \frac{\partial u}{\partial v}(x) \perp T_{u(x)} S$ for any $x \in \partial \mathbf{R}_{+}^{m}$, where $f$ is some positive function on $\mathbf{R}_{+}^{m}$ which satisfy
		\[
	\left(c+ 2p-m +\frac{\partial \phi}{\partial x_{i}} \cdot x_{i}\right) f(x) \leq (\frac{m}{2}-p) \frac{\partial f}{\partial x_{i}} \cdot x_{i} , 
		\]
	$\nabla_{\frac{\partial } {\partial r} } f \geq 0 $, $ c $ is a positive constant.	If the p-energy $E_{p,\phi}(u)<\infty$, then $u$ must be a constant map.
	\end{thm}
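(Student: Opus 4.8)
The plan is to follow exactly the domain-scaling (Pohozaev) argument already used in the proofs of \autoref{thm1a} and \autoref{58}, now carrying along the extra symphonic term and the potential term of the $\Phi_{S,p,\varepsilon}$ energy. For $t\geq 0$ I would introduce the rescaled family $V_t(x):=u(tx)$ and set
$$
\Phi(R,t):=\int_{B(R)}\left\{\frac{1}{2p}\left[\frac{m-2p}{p^2}|dV_t|^{2p}+m^{\frac p2-1}\|V_t^*h\|^p\right]+\frac{1}{4\varepsilon^n}(1-|V_t|^2)^2\right\}e^{-\phi}\,\d v_g,
$$
with $B(R)=\mathbf{R}_+^m\cap\{|x|\leq R\}$, so that $\Phi(R,1)$ is the $\Phi_{S,p,\varepsilon}$-energy of $u$ over $B(R)$, which is bounded by $E_{p,\phi}(u)<\infty$.

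First I would compute the $t$-derivative at $t=1$ by the first-variation (Green) formula. The variation field is $\xi=\frac{\d V_t}{\d t}\big|_{t=1}=du(r\frac{\partial}{\partial r})$, and integration by parts produces an interior term paired against $\tau_{\Phi_{S,p,\varepsilon},\phi}(u)$ together with boundary terms over $\partial B(R)\cap\{x_m>0\}$ and over $\partial\mathbf{R}_+^m$. The interior term vanishes because $u$ is $\phi$-$\Phi_{S,p,\varepsilon}$ harmonic, i.e.\ by \eqref{cccccc}; note the potential contributes exactly the $\frac{1}{\varepsilon^n}(1-|u|^2)u$ piece of the operator, so nothing is left over. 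On $\partial\mathbf{R}_+^m=\{x_m=0\}$ the radial field $r\frac{\partial}{\partial r}=x_i\frac{\partial}{\partial x_i}$ is tangent to the boundary, so $\xi=du(r\frac{\partial}{\partial r})\in T_{u(x)}S$, while $\sigma_{p,u}(v)$ is built from the normal derivative $\frac{\partial u}{\partial v}\perp T_{u(x)}S$; hence the free-boundary term $\langle\sigma_{p,u}(v),\xi\rangle$ vanishes, exactly as in \cite{MR2766664,MR2050491}. What survives is the $\partial B(R)$ contribution, which is non-negative as in \autoref{thm1a} and \autoref{58}, giving $\frac{\partial}{\partial t}\Phi(R,t)\big|_{t=1}\geq 0$.

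Next I would compute $\Phi(R,t)$ explicitly through the substitution $y=tx$. Under $V_t(x)=u(tx)$ one has $|dV_t|^2=f^{-1}t^2(\cdots)$ and $\|V_t^*h\|^2=f^{-2}t^4(\cdots)$, so both the $|dV_t|^{2p}$ and the $\|V_t^*h\|^p$ terms scale like $t^{2p}f^{-p}$, while the potential $(1-|V_t|^2)^2$ carries no power of $t$; the volume form contributes $f^{m/2}$ and the weight becomes $e^{-\phi(x/t)}$. Differentiating in $t$ and setting $t=1$, the $t^{-m}$ prefactor, the $t^{2p}$ factor and the chain-rule derivatives of $f^{-p}(x/t)$, $f^{m/2}(x/t)$ and $e^{-\phi(x/t)}$ assemble, for the gradient and symphonic parts, into the coefficient $\bigl(-m+2p+\frac{\partial\phi}{\partial x_i}x_i\bigr)f+(p-\frac m2)\frac{\partial f}{\partial x_i}x_i$ in front of the density; the hypothesis on $f$ is precisely what forces this to be $\leq -cf$. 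For the potential part the analogous coefficient is $\bigl(-m+\frac{\partial\phi}{\partial x_i}x_i\bigr)f-\frac m2\frac{\partial f}{\partial x_i}x_i$; since $\nabla_{\frac{\partial}{\partial r}}f\geq0$ gives $\frac{\partial f}{\partial x_i}x_i\geq0$, the stated inequality for $f$ (which carries the extra $2pf$ on the left and $-p\frac{\partial f}{\partial x_i}x_i$ on the right) dominates this one, so it too is $\leq -cf$. The boundary contribution at $\partial B(R)$ supplies the term $R\frac{d}{dR}\Phi(R,t)$, whence
$$
\frac{d}{dt}\Big|_{t=1}\Phi(R,t)\leq -c\,\Phi(R,1)+R\frac{d}{dR}\Phi(R,1).
$$

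Combining the two computations yields $R\Phi'(R)\geq c\,\Phi(R)$, i.e.\ $\frac{d}{dR}\log\Phi(R)\geq c/R$, so that $\Phi(R)\geq\Phi(R_0)(R/R_0)^c$ by integration, which is the standard procedure of \cite{MR2766664,MR2050491}. If $u$ were non-constant then $\Phi(R_0)>0$ for some $R_0$, forcing $\Phi(R)\to\infty$ and contradicting $E_{p,\phi}(u)<\infty$; hence $u$ is constant. I expect the only genuine subtlety to be the potential term: unlike the pure $F$-energy, $(1-|u|^2)^2$ does not acquire the scaling factor $t^{2p}$, so one must check separately that its differentiated coefficient is still dominated by $-cf$, and this is exactly where the extra hypothesis $\nabla_{\frac{\partial}{\partial r}}f\geq0$ is needed.
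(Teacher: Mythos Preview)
Your proposal is correct and follows essentially the same route as the paper: define the scaled family $V_t(x)=u(tx)$ and the local energy $\Phi(R,t)$, use the first-variation formula together with the Euler--Lagrange equation \eqref{cccccc} and the free-boundary condition to get $\frac{\partial}{\partial t}\Phi(R,t)\big|_{t=1}\geq 0$, then compute $\Phi(R,t)$ explicitly via the substitution $y=tx$ and differentiate to obtain $\frac{d}{dt}\big|_{t=1}\Phi(R,t)\leq -c\,\Phi(R,1)+R\frac{d}{dR}\Phi(R,1)$, and conclude by the monotonicity $\Phi(R)\geq C(R/R_0)^c$. Your treatment of the potential term is in fact more explicit than the paper's: you correctly isolate that $(1-|V_t|^2)^2$ carries no $t^{2p}$ scaling and that the resulting coefficient $(-m+\frac{\partial\phi}{\partial x_i}x_i)f-\frac{m}{2}\frac{\partial f}{\partial x_i}x_i$ is dominated by the one for the gradient terms precisely because $\nabla_{\partial/\partial r}f\geq 0$, which is where that extra hypothesis enters.
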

	
	\begin{proof}
		For $t \geq 0$, we define a family $\left\{V_{t}\right\}: \mathbf{R}_{+}^{m} \rightarrow N$ of maps by $V_{t}(x):=u(t x)$ for $x \in \mathbf{R}_{+}^{m}$, and set
\begin{equation*}
	\begin{split}
			E_{\Phi_{S, p, \varepsilon}}(u) &=\int_M\left\{\frac{1}{2 p}\left[\frac{m-2 p}{p^2}|d  V_{t}|^{2 p}+m^{\frac{p}{2}-1}\left\| V_{t}^* h\right\|^p\right]+\frac{1}{4 \varepsilon^n}\left(1-| V_{t}|^2\right)^2\right\} e^{-\phi} d v_g,
	\end{split}
\end{equation*}
		where $B(R)=\mathbf{R}_{+}^{m} \cap\{x:|x| \leq R\}$.	Let $ \xi=\left.\frac{\mathrm{d} V_{t}}{\mathrm{~d} t}\right|_{t=1}=du(r\frac{\partial}{\partial r} )$ , applying Green's theorem, we have

		\begin{equation*}
			\begin{split}
				\left.\frac{\partial}{\partial t} \Phi(R, t)\right|_{t=1}=&\int_{B(R)} \langle \nabla^u \xi , \sigma_{u,F} \rangle +\langle du(\xi),\frac{1}{\epsilon^n}(1-u^2)u\rangle e^{-\phi} \d v_g \\  =&\int_{B(R)}\left\langle \tau_{\Phi_{S, p, \varepsilon},\phi}(u), \mathrm{d} u\left(r \frac{\partial}{\partial r}\right)\right\rangle  e^{-\phi} \mathrm{d} x \\
				& +R \int_{\partial B(R) \cap\left\{x_{m}>0\right\}}\left\langle \sigma_{u,F}\left(\frac{\partial}{\partial v}\right), \mathrm{d} u\left(\frac{\partial}{\partial r}\right)\right\rangle e^{-\phi} \sigma_{R} \\
				& +\int_{\partial B(R) \cap\left\{x_{m}>0\right\}} \left\langle\sigma_{u,F}(v),\xi \right\rangle e^{-\phi} \mathrm{d} x,
			\end{split}
		\end{equation*}
		where $\frac{\partial}{\partial v}=f^{-1} \frac{\partial}{\partial r}$, $ \tau_{\Phi_{S, p, \varepsilon},\phi}(u) $ is given in  \eqref{cccccc}.		
		So we get
		$$
		\begin{aligned}
			\left.\frac{\partial}{\partial t} \Phi(R, t)\right|_{t=1}\geq 0.
		\end{aligned}
		$$

		On the other hand,
	$$
	\begin{aligned}
			& \int_{B(R)} \frac{1}{2p}m^{\frac{p}{2}-1}\left\| V_{t}^* h\right\|^p v_{g}\\
		&=\int_{B(R)} \frac{1}{2p}m^{\frac{p}{2}-1}\bigg[f^{-1}\left( (h_{\alpha \beta}(u(t x)) \frac{\partial u^{\alpha}(t x)}{\partial x_{i}} \frac{\partial u^{\beta}(t x)}{\partial x_{j}}\right) ^2)\bigg]^{\frac{p}{2}} f^{\frac{m}{2}}(x)e^{-\phi} \mathrm{d}v_{g_0},\\
		&=t^{-m}\int_{B(tR)} \frac{1}{2p}m^{\frac{p}{2}-1}\bigg(f^{-1}(\frac{x}{t})t^4\left( (h_{\alpha \beta}(u( x)) \frac{\partial u^{\alpha}( x)}{\partial x_{i}} \frac{\partial u^{\beta}( x)}{\partial x_{j}}\right) ^2\bigg)^{\frac{p}{2}} f^{\frac{m}{2}}(\frac{x}{t})e^{-\phi} \mathrm{d}v_{g_0},\\
		&=t^{2p-m}\int_{B(tR)}\frac{1}{2p}m^{\frac{p}{2}-1}\left( (h_{\alpha \beta}(u( x)) \frac{\partial u^{\alpha}( x)}{\partial x_{i}} \frac{\partial u^{\beta}( x)}{\partial x_{j}}\right) ^p f^{\frac{m-2p}{2}}(\frac{x}{t}) e^{-\phi(\frac{x}{t})} \mathrm{d}v_{g_0},
	\end{aligned}
	$$
		
	Thus, 	
		$$
		\begin{aligned}
			&\left.\frac{\partial}{\partial t}\right|_{t=1}\int_{B(R)} \frac{1}{2p} m^{\frac{p}{2}-1}\left\| V_{t}^* h\right\|^pe^{-\phi(x)}d v_{g_0}\\
			=& (2p-m)  \int_{B(R)}\frac{1}{2p}m^{\frac{p}{2}-1} f^{(m-2p) / 2}(x) {e}_{2p}(u) e^{-\phi(x)}\mathrm{d} v_{g_0} \\
			& -\frac{m-2p}{2} \int_{B(R)}\frac{1}{2p}m^{\frac{p}{2}-1}f^{(m-2p-2) / 2}(x) {e}_{2p}(u) \cdot\left(\frac{\partial f}{\partial x_{i}} \cdot x_{i}\right) e^{-\phi(x)}\mathrm{d} v_{g_0} \\
			+&\int_{B(tR)}\frac{1}{2p}m^{\frac{p}{2}-1}\left( (h_{\alpha \beta}(u( x)) \frac{\partial u^{\alpha}( x)}{\partial x_{i}} \frac{\partial u^{\beta}( x)}{\partial x_{j}}\right) ^p f^{\frac{m-2p}{2}}(x)  \frac{\partial \phi}{\partial x_{i}} \cdot x_{i} e^{-\phi(x)}\mathrm{d}v_{g_0}\\
			& +\int_{\partial B(R) \cap\left\{x_{m} \geq 0\right\}} R^{m-1}\frac{1}{2p}m^{\frac{p}{2}-1} f^{(m-2p) / 2}(x) {e}_{2p}(u). e^{-\phi(x)}\sigma_{R} \\
		\end{aligned}
				$$
 On the other hand, 				
				
				$$	
			\begin{aligned}
				&\int_M\left\{\frac{1}{2 p}\frac{m-2 p}{p^2}|dV_{t}|^{2p}\right\} e^{-\phi(x)}d v_g\\
				&=\frac{1}{2 p}t^{-m}\int_{B(tR)}\frac{m-2 p}{p^2} \bigg[f^{-1}(\frac{x}{t})t^2\left( (h_{\alpha \beta}(u( x)) \frac{\partial u^{\alpha}( x)}{\partial x_{i}} \frac{\partial u^{\beta}( x)}{\partial x_{i}}\right) \bigg]^{p} f^{\frac{m}{2}}(\frac{x}{t})e^{-\phi(x)} \mathrm{d}v_{g_0},\\
						&=\frac{1}{2 p}t^{2p-m}\int_{B(tR)} \frac{m-2 p}{p^2}\bigg[\left( (h_{\alpha \beta}(u( x)) \frac{\partial u^{\alpha}( x)}{\partial x_{i}} \frac{\partial u^{\beta}( x)}{\partial x_{i}}\right) \bigg]^{p} f^{\frac{m}{2}-p}(\frac{x}{t})e^{-\phi(x)} \mathrm{d}v_{g_0},
					\end{aligned}
				$$
		Thus,
		$$
		\begin{aligned}
		&\left.\frac{\partial}{\partial t}\right|_{t=1}	\int_M\left\{\frac{m-2 p}{p^2}\frac{1}{2 p}|dV_{t}|^{2p}\right\} e^{-\phi} d v_g \\
		&=  (2p-m)\int_{B(R)}\frac{1}{2p} \frac{m-2 p}{p^2} f^{(m-2p) / 2}(x) {e}_{2p}(u) e^{-\phi}\mathrm{d} x \\
			& -\frac{m-2p}{2} \int_{B(R)}\frac{1}{2p} \frac{m-2 p}{p^2}f^{(m-2p-2) / 2}(x) {e}_{2p}(u) \cdot\left(\frac{\partial f}{\partial x_{i}} \cdot x_{i}\right)e^{-\phi} \mathrm{d} x \\
			&+\int_{B(R)}\frac{1}{2p} \frac{m-2 p}{p^2} f^{(m-2p) / 2}(x) {e}_{2p}(u)\left( \frac{\partial \phi}{\partial x_{i}} \cdot x_{i}\right)  e^{-\phi}\mathrm{d} x \\
			& +R^{m-1} \int_{\partial B(R) \cap\left\{x_{m} \geq 0\right\}} \frac{1}{2p}\frac{m-2 p}{p^2} f^{(m-2p) / 2}(x) {e}_{2p}(u) e^{-\phi} \sigma_{R} .
					\end{aligned}
		$$
	While,
		
		$$
		\begin{aligned}
		\frac{1}{4 \varepsilon^n}	\int_M \left(1-| V_{t}|^2\right)^2e^{-\phi}\d v_g=\frac{1}{4 \varepsilon^n}t^{-m}\int_{  B(tR) } (1-|u|^2)^2 f^{\frac{m}{2}}(\frac{x}{t}) e^{-\phi(\frac{x}{t})}\d v_{g_0}.
					\end{aligned}
		$$		
		Thus, 		
		$$
		\begin{aligned}
			\left.\frac{\partial}{\partial t}\right|_{t=1}\int_M\frac{1}{4 \varepsilon^n}	 \left(1-| V_{t}|^2\right)^2e^{-\phi} dv_g &=-m\int_{  B(R) }\frac{1}{4 \varepsilon^n} (1-|u|^2)^2 f^{\frac{m}{2}}e^{-\phi}dv_{g_0}\\
			&-\int_{  B(R) }\frac{1}{4 \varepsilon^n} (1-|u|^2)^2 f^{\frac{m}{2}-1}\left( \frac{\partial f}{\partial x_{i}} \cdot x_{i}\right)  e^{-\phi} dv_{g_0}\\
			&+\int_{  B(R) }\frac{1}{4 \varepsilon^n} (1-|u|^2)^2 f^{\frac{m}{2}}\left( \frac{\partial \phi}{\partial x_{i}} \cdot x_{i}\right)  e^{-\phi} dv_{g_0}\\
			&+R^{m-1}\int_{  \partial B(R) } \frac{1}{4 \varepsilon^n}(1-|u|^2)^2 f^{\frac{m}{2}}dv_{g_0}.
		\end{aligned}
		$$
				Thus, 
		$$
		\begin{aligned}
				\left.\frac{\partial}{\partial t} \Phi(R, t)\right|_{t=1} \leq & -c \Phi(R, 1)+R \frac{\mathrm{d}}{\mathrm{d} R} \Phi(R, 1).
		\end{aligned}
		$$		
From this inequality , we deduce that 
		
		$$
		\begin{aligned}
			\int_{B(R)}\left\{\frac{1}{2 p}\left[\frac{m-2 p}{p^2}|d  u|^{2 p}+m^{\frac{p}{2}-1}\left\| u^* h\right\|^p\right]+\frac{1}{4 \varepsilon^n}\left(1-| V_{t}|^2\right)^2\right\}e^{-\phi} d v_g\geq C (\frac{R}{R_0})^{c}.
		\end{aligned}
		$$
		This is an contradiction to our assumption.
		\end{proof}
	
	\begin{thm}\label{thmabc1}	Let $ \left(\mathbf{R}_{+}^{m}, g=f^2 g_{0},e^{-\phi} \d v_g\right) $ be a complete metric measure space with a pole $  x_0,$  where $ g_0 $ is  the standard Euclidean metric on $ \mathbf{R}_{+}^{m}.$  Let   $ (N^n, h) $ be a Riemannian manifold. 
		Let  $ u: \left(\mathbf{R}_{+}^{m}, f g_{0}\right) \to (N,h)  $ be  $ \phi $-$ \Phi_{S, p, \epsilon} $ harmonic map ,
		Assume that 
		
	\begin{equation}\label{er5}
		\begin{aligned}
			\left( 	\int_R^{\infty} \frac{1}{ \left(  \int_{\partial B(R)} e^{-\phi} f^{m}\d v_{g_0}\right) ^{\frac{1}{2p-1}}} d r  \right)^{-1} \leq CR^{\frac{c}{2p-1}} ,
		\end{aligned}
	\end{equation}	
	and	
\begin{equation}\label{erf}
	\begin{aligned}
		\left(c+2p-m +\frac{\partial \phi}{\partial x_{i}} \cdot x_{i}\right) f^2(x) \leq (\frac{m}{2}-p) \frac{\partial f^2}{\partial x_{i}} \cdot x_{i} , 
	\end{aligned}
\end{equation}
for some constant $  \varepsilon>0 , p< \frac{m}{2}.$   Furtherly, $ u (x) \to p_0 $ as $ |x|\to \infty $, then $ u $  is a constant.
		%
		

	\end{thm}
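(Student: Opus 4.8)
The plan is to follow exactly the scheme used in the proofs of \autoref{thmabc} and \autoref{thm2}, adapting it to the mixed $p$-energy/symphonic structure of the $\Phi_{S,p,\varepsilon}$-functional. First I would extend $u$ across the free boundary $\partial\mathbf{R}_+^m$ as before: using $\lim_{|x|\to\infty}u(x)=p_0$, one finds a slab neighbourhood $U_{r_0}$ of $\partial\mathbf{R}_+^m$ on which the image lies in a standard tubular neighbourhood $\mathcal{N}(S)$, and then defines the reflected map $\tilde{u}$ by projecting $u(x)$ onto $S$ along minimal geodesics and reflecting; by the argument in part~4 of \cite{gulliver1987harmonic} the resulting $\tilde{u}:U_{r_0}\cup\mathbf{R}_+^m\to N$ is smooth, and the free-boundary condition $\partial u/\partial v\perp T_{u(x)}S$ is precisely what makes $\tilde{u}$ a $\phi$-$\Phi_{S,p,\varepsilon}$ harmonic map on the enlarged domain.

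Next, arguing by contradiction, I would assume $u$ is not constant and invoke \autoref{thm3} to obtain the growth bound $E(u)\geq C(u)R^{c}$ as $R\to\infty$, with $c>0$ coming from \eqref{erf}. Choosing a chart $(U,\varphi)$ around $p_0$ with $\varphi(p_0)=0$, the hypothesis $u(x)\to p_0$ produces an $R_1$ beyond which $u(x)\in U$ and the coefficient inequality $\left(\frac{\partial h_{\alpha\beta}}{\partial y^\gamma}y^\gamma+2h_{\alpha\beta}\right)\geq (h_{\alpha\beta})$ holds. I would then insert the radial test field $\omega=\psi(r)\Phi(x_m)\tilde{u}$, built from the cut-offs \eqref{equ2} and \eqref{equ3}, into the first variation $\frac{d}{dt}\big|_{t=0}E_{\Phi_{S,p,\varepsilon},\phi}(\tilde{u}+t\omega)=0$. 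Integrating by parts and letting $s\to0$ (so $\tilde{u}\to u$), one gets an identity expressing a bulk quantity $Z(R)$---the $\Phi_{S,p,\varepsilon}$ gradient-energy density weighted by $f^m e^{-\phi}$ over $B(R)\setminus B(R_2)$, plus a fixed reflection remainder $D^*(R_1)$---in terms of a single flux integral over $\partial B(R)$, exactly as in the identities derived in the proofs of \autoref{thmabc} and \autoref{thm2}.

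The analytic heart is the estimate of that boundary flux. Since $\sigma_{p,u}$ in \eqref{cccccc} carries the factors $|du|^{2p-2}$ and $\|u^*h\|^{p-2}$, instead of the Cauchy--Schwarz step used for the harmonic case I would apply Hölder's inequality with the conjugate exponents $\tfrac{2p}{2p-1}$ and $2p$. This produces a differential inequality $Z^{\frac{2p}{2p-1}}(R)\leq C\,Z'(R)M(R)$ with
\[
M(R)=\left(\int_{\partial B(R)}\Big(\textstyle\sum_{\alpha,\beta}h_{\alpha\beta}u^\alpha u^\beta\Big)^{p}f^{m}(x)e^{-\phi}\,\d S_{g_0}\right)^{\frac{1}{2p-1}}.
\]
Dividing by $Z^{\frac{2p}{2p-1}}$ and integrating, the left side integrates to a multiple of $Z^{-\frac{1}{2p-1}}(R)$, so after sending the upper limit to infinity one finds $Z(R)\leq C\big(\int_R^\infty M(r)^{-1}\,\d r\big)^{-(2p-1)}$. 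Choosing a decreasing gauge $\eta(R)\to0$ dominating $\big(\sum_{\alpha,\beta}h_{\alpha\beta}u^\alpha u^\beta\big)^{p}$ on $\partial B(R)$ and using $u(x)\to0$ gives $M(R)\leq C\,\eta^{\frac{1}{2p-1}}(R)\big(\int_{\partial B(R)}f^m e^{-\phi}\,\d v_{g_0}\big)^{\frac{1}{2p-1}}$; feeding this together with the decay hypothesis \eqref{er5} into the previous bound yields $Z(R)\leq C\eta(R)R^{c}$. Since the lower bound from \autoref{thm3} forces $Z(R)\geq CR^{c}$ and $\eta\to0$, this is the desired contradiction, so $u$ must be constant.

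The step I expect to be the main obstacle is the Hölder bookkeeping for the boundary flux: because $\sigma_{p,u}$ is the sum of a $p$-Laplacian-type term $\tfrac{m-2p}{p^2}|du|^{2p-2}du$ and a symphonic-type term $m^{p/2-1}\|u^*h\|^{p-2}h(du(e_j),du)du(e_j)$, one must split the flux and control both pieces with a single pair of conjugate exponents, so that the same power $2p-1$ emerges on both sides of the differential inequality and matches the exponent in \eqref{er5}. Verifying that the potential contribution $\tfrac{1}{\varepsilon^n}(1-|u|^2)^2$---which does not vanish as $u\to0$---and the reflection remainder $D^*(R_1)$ only affect $Z(R)$ by a harmless additive constant are the remaining points requiring care, but they follow the template of \cite{MR3449358,132132132}.
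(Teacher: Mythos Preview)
Your overall architecture is right, and the Hölder step with conjugate exponents $\tfrac{2p}{2p-1},\,2p$ is exactly what the paper uses. The genuine gap is the choice of test field. You take $\omega=\psi(r)\Phi(x_m)\tilde u$, i.e.\ the reflected map itself, as in the $F$-harmonic and $F$-symphonic cases. The paper does \emph{not}: following \cite[Theorem 3.3]{132132132} it takes $\omega=\phi(r)\Phi(x_m)\hat u$ with the nonlinear modification
\[
\hat u_\alpha=\frac{\tilde u_\alpha^{\,2}-c_\alpha^{2}}{\tilde u_\alpha},\qquad \sum_\alpha c_\alpha^2=1.
\]
This choice is not cosmetic. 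It gives the two identities
\[
\frac{\partial \hat u_\alpha}{\partial x_j}=\Bigl(1+\frac{c_\alpha^{2}}{\tilde u_\alpha^{\,2}}\Bigr)\frac{\partial \tilde u_\alpha}{\partial x_j},
\qquad
\sum_\alpha \tilde u_\alpha \hat u_\alpha=-(1-|\tilde u|^{2}),
\]
and the second one is what turns the potential contribution in the first variation, namely $-\int\frac{1}{\varepsilon^{n}}(1-|u|^{2})\,u_\alpha\hat u_\alpha$, into $+\int\frac{1}{\varepsilon^{n}}(1-|u|^{2})^{2}$, i.e.\ precisely the potential piece of the energy with the correct sign. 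With your test field $\omega=\tilde u$ one instead obtains $-\int\frac{1}{\varepsilon^{n}}(1-|u|^{2})|u|^{2}$, which has the wrong sign and the wrong density.

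This is why your closing remark that the potential term is ``a harmless additive constant'' cannot be sustained: since $u(x)\to p_0=0$, the density $(1-|u|^{2})^{2}\to 1$ at infinity, so its integral over $B(R)$ grows like the weighted volume, not like a bounded quantity. Consequently you cannot drop it from $Z(R)$ (the lower bound $Z(R)\ge CR^{c}$ coming from \autoref{thm3} is for the \emph{full} $\Phi_{S,p,\varepsilon}$-energy, potential included), and with your test field you cannot absorb it either. The fix is exactly the modified test function above; once you use $\hat u$, the bulk term in your identity is bounded below by the full integrand $m^{p/2-1}\|u^*h\|^{p}+\tfrac{m-2p}{p^{2}}|du|^{2p}+\tfrac{1}{\varepsilon^{n}}(1-|u|^{2})^{2}$, and the rest of your outline (Hölder on $\partial B(R)$, the differential inequality $Z^{2p/(2p-1)}\le CZ'M$, integration, the gauge $\eta(R)$, and the contradiction with \eqref{er5}) goes through as you describe and matches the paper.
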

\begin{proof}

%

Suppose $ u $ is not constant, then  by the proof of Theorem \ref{thm3}, we get

\begin{equation}\label{158}
	\begin{aligned}
		E_{\Phi_{S, p, \epsilon},\phi}^R(u)=\int_{B(R)}\left\{\frac{1}{2 p}\left[\frac{m-2 p}{p^2}|d  V_{t}|^{2 p}+m^{\frac{p}{2}-1}\left\| V_{t}^* h\right\|^p\right]+\frac{1}{4 \varepsilon^n}\left(1-| V_{t}|^2\right)^2\right\} d v_g\geq C (\frac{R}{R_0})^{c}.
	\end{aligned}
\end{equation}

By the same argument as in the proof of \autoref{thmabc},  we define the reflection ${u}(x)$ as follows
	$$
	\begin{cases}\tilde{u}(x)=u(x), & x \in \mathbf{R}_{+}^m, \\ \tilde{u}(x)=Q=u(\bar{x}), & x \in U_{r_0} \backslash \mathbf{R}_{+}^m .\end{cases}
	$$
As before, we know that $\tilde{u}: U_{r_0} \cup \mathbf{R}_{+}^m \rightarrow N$ is a smooth map.  We follow the proof of \cite{132132132} and use the notations in \cite[Theorem 3.3]{132132132} .


Choose a local coordinate neighbourhood $ (U, \varphi) $ of $ p_0 $ in $ N^n $, such that
$  \varphi(p_0) = 0 $.	For $w \in C_{0}^{2}\left(R_+^{m}\cap U_{r_0} \backslash B\left(R_{1}\right), exp^{-1}_{p_0}(U)\right)$, we consider the variation $\tilde{u}+t w: M^{m} \rightarrow$ $N^{n}$,
Since $\tilde{u}$ is $ \phi$-$\Phi_{S, p, \varepsilon}$-harmonic map, thus
$$
\left.\frac{d}{d t} E_{\Phi_{S, p, \varepsilon},\phi}(\tilde{u}+t \omega)\right|_{t=0}=0,
$$
which is 

$$
\begin{aligned}
0=&\int_{\mathbb{R}^m_{r_0} \backslash B\left(R_1\right)} m^{\frac{p}{2}-1}\left\|u^* h\right\|^{p-2} g^{i k} g^{j l} \frac{\partial \tilde{u}_\alpha}{\partial x_i} \frac{\partial \omega_\alpha}{\partial x_j} \frac{\partial \tilde{u}_\beta}{\partial x_k} \frac{\partial \tilde{u}_\beta}{\partial x_l}\\
	& +\int_{\mathbb{R}^m_{r_0} \backslash B\left(R_1\right)} \frac{m-2 p}{p^2}|d u|^{2 p-2} g^{i j} \frac{\partial \tilde{u}_\alpha}{\partial x_i} \frac{\partial \omega_\alpha}{\partial x_j} e^{-\phi} f^{m-2} d v_{g_0} \\
	& -\int_{\mathbb{R}^m_{r_0}\backslash B\left(R_1\right)} \frac{1}{\varepsilon^n}\left(1-|u|^2\right) u_\alpha \omega_\alpha e^{-\phi}d v_g ,
\end{aligned}
$$
where $ \mathbb{R}^m_{r_0}=\{x| x\in \mathbb{R}^m,x_m>-r_0\}. $

 $\phi(t) \in C_0^{\infty}\left(R_1, \infty\right)$. We  take $  \omega= \phi(r(x)) \Phi(x_m)\hat{u}, $,  $\hat{u}_\alpha=\frac{\tilde{u}_\alpha^2-c_\alpha^2}{\tilde{u}_\alpha}$, where $ \psi(r) $ and $ \Phi(x_m) $ are given by \eqref{equ2}\eqref{equ3}. Notice that  the noatations $ \tilde{u} $ in \cite[Theorem 3.3]{132132132} is $ \hat{u} $ here.  we have

\begin{equation}\label{ddd}
	\begin{aligned}
			& \int_{\mathbb{R}_{r_0}^m \backslash B\left(R_1\right)} m^{\frac{p}{2}-1}\left\|u^* h\right\|^{p-2} g^{i k} g^{j l} \frac{\partial \tilde{u}_\alpha}{\partial x_i} \frac{\partial \hat {u}_\alpha}{\partial x_j} \frac{\partial \tilde{u}_\beta}{\partial x_k} \frac{\partial \tilde{u}_\beta}{\partial x_l} \phi(r(x)) e^{-\phi} f^{m-4} d v_{g_0} \\
		& +\int_{\mathbb{R}_{r_0}^m\backslash B\left(R_1\right)} \frac{m-2 p}{p^2}|d u|^{2 p-2} g^{i j} \frac{\partial \tilde{u}_\alpha}{\partial x_i} \frac{\partial \hat {u}_\alpha}{\partial x_j}  \phi(r(x)) \Phi(x_m) e^{-\phi} f^{m-2} d v_{g_0} \\
		& -\int_{\mathbb{R}_{r_0}^m\backslash B\left(R_1\right)} \frac{1}{\varepsilon^n}\left(1-|u|^2\right) \tilde{u}_\alpha \hat{u}_\alpha  \phi(r(x)) \Phi(x_m) e^{-\phi} d v_g \\
		=& -\int_{\mathbb{R}_{r_0}^m \backslash B\left(R_1\right)} m^{\frac{p}{2}-1}\left\|u^* h\right\|^{p-2} g^{i k} g^{j l} \frac{\partial \tilde{u}_\alpha}{\partial x_i} \frac{\partial ( \phi(r(x)) \Phi(x_m)))}{\partial x_j} \frac{\partial \tilde{u}_\beta}{\partial x_k} \frac{\partial \tilde{u}_\beta}{\partial x_l} \hat{u}_\alpha e^{-\phi} f^{m-4} d v_{g_0}\\
		& -\int_{\mathbb{R}_{r_0}^m\backslash B\left(R_1\right)} \frac{m-2 p}{p^2}|d u|^{2 p-2} g^{i j} \frac{\partial \tilde{u}_\alpha}{\partial x_i} \frac{\partial ( \phi(r(x)) \Phi(x_m)))}{\partial x_j} \hat{u}_\alpha e^{-\phi} f^{m-2} d v_{g_0} .
	\end{aligned}
\end{equation}
%
 The left hand side of (\ref{ddd})  can be rewritten as

\begin{equation}\label{}
	\begin{aligned}
		& \int_{\mathbb{R}_{r_0}^m \cap    \left( B\left(R_2\right) \backslash B\left(R_1\right)\right)    } m^{\frac{p}{2}-1}\left\|u^* h\right\|^{p-2} g^{i k} g^{j l} \frac{\partial \tilde{u}_\alpha}{\partial x_i} \frac{\partial \hat {u}_\alpha}{\partial x_j} \frac{\partial \tilde{u}_\beta}{\partial x_k} \frac{\partial \tilde{u}_\beta}{\partial x_l}\left(1-\varphi_1\left(\frac{r(x)}{R_1}\right)\right) e^{-\phi} f^{m-4} d v_{g_0} \\
		& +\int_{\mathbb{R}_{r_0}^m \cap    (B\left(R_2\right) \backslash B\left(R_1\right))} \frac{m-2 p}{p^2}|d u|^{2 p-2} g^{i j} \frac{\partial \tilde{u}_\alpha}{\partial x_i} \frac{\partial \hat {u}_\alpha}{\partial x_j}\left(1-\varphi_1\left(\frac{r(x)}{R_1}\right)\right)e^{-\phi} f^{m-2} d v_{g_0}\\
		& -\int_{\mathbb{R}_{r_0}^m \cap  (B\left(R_2\right) \backslash B\left(R_1\right))  } \frac{1}{\varepsilon^n}\left(1-|u|^2\right) u_\alpha {u}_\alpha\left(1-\varphi_1\left(\frac{r(x)}{R_1}\right)\right) e^{-\phi}  d v_{g} \\
		& +\int_{  \mathbb{R}_{r_0}^m \cap  \left( B(R) \backslash B\left(R_2\right)\right)  } m^{\frac{p}{2}-1}\left\|u^* h\right\|^{p-2} g^{i k} g^{j l} \frac{\partial \tilde{u}_\alpha}{\partial x_i} \frac{\partial \hat {u}_\alpha}{\partial x_j} \frac{\partial u_\beta}{\partial x_k} \frac{\partial \tilde{u}_\beta}{\partial x_l} e^{-\phi} f^{m-4} d v_{g_0} \\
		& +\int_{  \mathbb{R}_{r_0}^m \cap   \left( B(R) \backslash B\left(R_2\right)\right)  } \frac{m-2 p}{p^2}|d u|^{2 p-2} g^{i j} \frac{\partial \tilde{u}_\alpha}{\partial x_i} \frac{\partial \hat {u}_\alpha}{\partial x_j} e^{-\phi} f^{m-2} d v_{g_0}\\
		& -\int_{ \mathbb{R}_{r_0}^m \cap    \left( B(R) \backslash B\left(R_2\right)\right) } \frac{1}{\varepsilon^n}\left(1-|u|^2\right) \tilde{u}_\alpha \hat{u}_\alpha e^{-\phi} d v_g \\
		& +\int_{ \mathbb{R}_{r_0}^m \cap   \left( B((1+\theta) R) \backslash B(R) \right) } m^{\frac{p}{2}-1}\left\|u^* h\right\|^{p-2} g^{i k} g^{j l} \frac{\partial \tilde{u}_\alpha}{\partial x_i} \frac{\partial \hat {u}_\alpha}{\partial x_j} \frac{\partial \tilde{u}_\beta}{\partial x_k} \frac{\partial \tilde{u}_\beta}{\partial x_l} \varphi_\theta\left(\frac{r(x)}{R}\right) e^{-\phi} f^{m-4} d v_{g_0} \\
		& +\int_{ \mathbb{R}_{r_0}^m \cap     \left( B((1+\theta) R) \backslash B(R) \right) } \frac{m-2 p}{p^2}|d u|^{2 p-2} g^{i j} \frac{\partial \tilde{u}_\alpha}{\partial x_i} \frac{\partial \hat {u}_\alpha}{\partial x_j} \varphi_\theta\left(\frac{r(x)}{R}\right) e^{-\phi} f^{m-2} d v_{g_0} \\
		&-\int_{ \mathbb{R}_{r_0}^m \cap   \left( B\langle\{1+\theta\} R\rangle \backslash B(R)\} \right) } \frac{1}{\epsilon^n}\left(1-|u|^2\right) \tilde{u}_\alpha \hat{u}_\alpha \psi_0\left(\frac{r(x)}{R}\right)e^{-\phi} dv_g,
	\end{aligned}
\end{equation}
Here $R_2=2 R_1$. Let $\theta \rightarrow 0$,  the right hand side of  $(\ref{ddd})$  becomes

\begin{equation}\label{ccc}
	\begin{aligned}
		& \int_{ \mathbb{R}_{r_0}^m \cap (B(R)\backslash B\left(R_2\right))} m^{\frac{p}{2}-1}\left\|u^* h\right\|^{p-2} g_0^{i k} g_0^{j i} \frac{\partial \tilde{u}_\alpha}{\partial x_i} \frac{\partial \hat {u}_\alpha}{\partial x_j} \frac{\partial \tilde{u}_\beta}{\partial x_k} \frac{\partial \tilde{u}_\beta}{\partial x_l} e^{-\phi} f^{m-4} d v_{g_0} \\
		& +\int_{ \mathbb{R}_{r_0}^m \cap (B(R)\backslash B\left(R_2\right))} \frac{m-2 p}{p^2}|d u|^{2 p-2} g_0^{i j} \frac{\partial \tilde{u}_\alpha}{\partial x_i} \frac{\partial \tilde{u}_\alpha}{\partial x_j} e^{-\phi} f^{m-2} d v_{g_0} \\
		& -\int_{ \mathbb{R}_{r_0}^m \cap (B(R)\backslash B\left(R_2\right))} \frac{1}{\epsilon^n}\left(1-|u|^2\right) \tilde{u}_\alpha \hat{u}_{\alpha}e^{-\phi}dv_g+D\left(R_1\right) \\
		 =&\int_{\partial B(R)\cap \mathbb{R}_{r_0}^m} m^{\frac{1}{2}-1}\left\|u^* h\right\|^{p-2} g^{j k} g^i \frac{\partial \tilde{u}_\alpha}{\partial x_i} \frac{\partial r(x)}{\partial x_j} \frac{\partial \tilde{u}_\beta}{\partial x_k} \frac{\partial \tilde{u}_\beta}{\partial x_i} \hat{u}_{\alpha} e^{-\phi} f^{m-4} d s \\
		& +\int_{\partial B(R)\cap \mathbb{R}_{r_0}^m} \frac{m-2 p}{p^2}|d u|^{2 p-2} g^{i j} \frac{\partial \tilde{u}_\alpha}{\partial x_i} \frac{\partial r(x)}{\partial x_j} \hat{u}_{\alpha} e^{-\phi} f^{m-2} d s, \\
		&-\int_{\mathbb{R}_{r_0}^m \backslash B\left(R_1\right)} m^{\frac{p}{2}-1}\left\|u^* h\right\|^{p-2} g^{i k} g^{j l} \frac{\partial \tilde{u}_\alpha}{\partial x_i} \phi(r(x)\frac{\partial \Phi(x_m)))}{\partial x_j} \frac{\partial \tilde{u}_\beta}{\partial x_k} \frac{\partial \tilde{u}_\beta}{\partial x_l} \hat{u}_\alpha e^{-\phi} f^{m-4} d v_{g_0} \\
		& -\int_{\mathbb{R}_{r_0}^m\backslash B\left(R_1\right)} \frac{m-2 p}{p^2}|d u|^{2 p-2} g^{i j} \frac{\partial \tilde{u}_\alpha}{\partial x_i} \phi(r(x))\frac{\partial (  \Phi(x_m)))}{\partial x_j} \hat{u}_\alpha e^{-\phi} f^{m-2} d v_{g_0},
	\end{aligned}
\end{equation}
where $D\left(R_1\right)$ is
$$
\begin{aligned}
	& D\left(R_1\right)=\int_{\mathbb{R}_{r_0}^m \cap (B(R)\backslash B\left(R_2\right))} m^{\frac{p}{2}-1}\left\|u^* h\right\|^{p-2} g^{i k} g^{i l} \frac{\partial \tilde{u}_\alpha}{\partial x_i} \frac{\partial \hat {u}_\alpha}{\partial x_j} \frac{\partial \tilde{u}_j}{\partial x_k} \frac{\partial \tilde{u}_\beta}{\partial x_l}\left(1-\varphi_1\left(\frac{r(x)}{R_1}\right)\right) e^{-\phi} f^{m-4} d v_{g_0} \\
	& +\int_{\mathbb{R}_{r_0}^m \cap (B(R)\backslash B\left(R_2\right))} \frac{m-2 p}{p^2}|d u|^{2 p-2} g^{i j} \frac{\partial \tilde{u}_\alpha}{\partial x_i} \frac{\partial \tilde{u}_\alpha}{\partial x_j}\left(1-\varphi_1\left(\frac{r(x)}{R_1}\right)\right) e^{-\phi} f^{m-2} d v_{g_0} \\
	& -\int_{\mathbb{R}_{r_0}^m \cap (B(R)\backslash B\left(R_2\right))} \frac{1}{\varepsilon^n}\left(1-|v|^2\right) \tilde{u}_\alpha \hat{u}_\alpha\left(1-\varphi_1\left(\frac{r(x)}{R_1}\right)\right) e^{-\phi}dv \\
	& -\int_{\mathbb{R}_{r_0}^m \cap (B(R)\backslash B\left(R_2\right))} m^{\frac{c}{2}-1}\left\|u^* h\right\|^{p-2} g^{j k} g^l \frac{\partial \tilde{u}_\alpha}{\partial x_i} \frac{\partial \varphi_1\left(\frac{r(x)}{R_1}\right)}{\partial x_j} \frac{\partial \tilde{u}_\beta}{\partial x_k} \frac{\partial \tilde{u}_\beta}{\partial x_l} \hat{u}_{\alpha} e^{-\phi} f^{m-4} d v_{g_0} \\
	& -\int_{\mathbb{R}_{r_0}^m \cap (B(R)\backslash B\left(R_2\right))} \frac{m-2 p}{p^2}|d u|^{2 \mathrm{p}-2} g^{i j} \frac{\partial \tilde{u}_{\alpha}}{\partial x_i} \frac{\partial \varphi_1\left(\frac{r[x]}{R_1}\right)}{\partial x_j} \hat{u}_{\alpha} e^{-\phi} f^{m-2} d v_{g_0} .
	&
\end{aligned}
$$
In \eqref{ccc}, we let $ s\to 0, $ noticing that $ \tilde{u} \to u, \hat{u}\to \bar{u} $ we get (cf.\cite{132132132} )

\begin{equation}\label{eq1}
	\begin{aligned}
		& \int_{        \mathbb{R}_{+}^m                                                                  \cap (B(R)\backslash B\left(R_2\right))} m^{\frac{1}{2}-1}\left\|u^* h\right\|^{p-2} g^{i k} g^{j i} \frac{\partial u_\alpha}{\partial x_i} \frac{\partial \bar{u}_\alpha}{\partial x_j} \frac{\partial u_\beta}{\partial x_k} \frac{\partial u_\beta}{\partial x_l} e^{-\phi} f^{m-4} d v_{g_0} \\
		& +\int_{      \mathbb{R}_{+}^m                                                                  \cap (B(R) \backslash B\left(R_2\right))} \frac{m-2 p}{p^2}|d u|^{2 p-2} g^{i j} \frac{\partial u_\alpha}{\partial x_i} \frac{\partial \bar{u}_\alpha}{\partial x_j} e^{-\phi} f^{m-2} d v_{g_0} \\
		& -\int_{      \mathbb{R}_{+}^m                                                                  \cap  \left( B(R) \backslash B (R_2)\right)  } \frac{1}{\epsilon^n}\left(1-|u|^2\right) u_\alpha \bar{u}_\alpha e^{-\phi}d v_g+D^*\left(R_1\right) \\
		=&\int_{\partial B(R) \cap       \mathbb{R}_{+}^m                                                                  } m^{\frac{p}{2}-1}\left\|u^* h\right\|^{p-2} g_0^{i k} g_0^{jl} \frac{\partial u_\alpha}{\partial x_i} \frac{\partial r(x)}{\partial x_j} \frac{\partial u_\beta}{\partial x_k} \frac{\partial u_\beta}{\partial x_i} \bar{u}_{\alpha} e^{-\phi} f^{m-4}d s \\
		& +\int_{\partial B(R) \cap \mathbb{R}_{r_0}^m} \frac{m-2 p}{p^2}|d u|^{2 p-2} g_0^{i j} \frac{\partial u_\alpha}{\partial x_i} \frac{\partial r(x)}{\partial x_j} \bar{u}_{\alpha} e^{-\phi} f^{m-2} d s, \\
	\end{aligned}
\end{equation}
Here  $\bar{u}_\alpha=\frac{u_{\alpha}^2-c_\alpha^2}{u_\alpha}, \alpha \in\{1,2, \cdots, n\}$, $ D^*\left(R_1\right) =\lim\limits_{s\to 0} D(R_1) $. Notice that  the notations $ \tilde{u} $ in \cite[Theorem 3.3]{132132132} is $ \bar{u} $ here. Hence by (27) and (28) in \cite{132132132}, 

$$
	\frac{\partial \bar{u}_a}{\partial x_j}=\left(1+\frac{c_\alpha^2}{u_\alpha^2}\right) \frac{\partial u_\alpha}{\partial x_j}, \quad \quad \sum_{a=1}^n u_\alpha \bar{u}_\alpha=-\left(1-\sum_{\alpha=1}^n u_\alpha^2\right)
$$
The LHS of (\ref{eq1}) is bounded below by (cf. \cite{132132132} ) 
$$
\begin{aligned}
	 &\int_{      \mathbb{R}_{+}^m                                                                  \cap (B(R)\backslash B\left(R_2\right))} m^{\frac{p}{2}-1} \| u^*h \|^{p-2} g_0^{i k} g_0^{j l} \frac{\partial u_\alpha}{\partial x_i} \frac{\partial u_\alpha}{\partial x_j} \frac{\partial u_\beta}{\partial x_k} \frac{\partial u_\beta}{\partial x_l}\left(1+\frac{c_\alpha^2}{u_\alpha^2}\right) e^{-\phi} f^{m-4}d v_{g_0} \\
	& +\int_{      \mathbb{R}_{+}^m                                                                  \cap (B(R)\backslash B\left(R_2\right))} \frac{m-2 p}{p^2}|d u|^{2 p-2} g_0^{i j} \frac{\partial u_\alpha}{\partial x_i} \frac{\partial u_\alpha}{\partial x_j}\left(1+\frac{c_\alpha^2}{u_\alpha^2}\right)e^{-\phi} f^{m-2} d v_{g_0} \\
	& +\int_{      \mathbb{R}_{+}^m                                                                  \cap (B(R)\backslash B\left(R_2\right))} \frac{1}{\epsilon^n}\left(1-|u|^2\right)^2e^{-\phi} d v_g+D^*\left(R_1\right) \\
	 \geq& \int_{      \mathbb{R}_{+}^m                                                                  \cap (B(R)\backslash B\left(R_2\right))}\left[m^{\frac{p}{2}-1}\left\|u^* h\right\|^p+\frac{m-2 p}{p^2}|d u|^{2 p}+\frac{1}{\varepsilon^n}\left(1-|u|^2\right)^2\right]  e^{-\phi} d v_g+D^*\left(R_1\right) .
\end{aligned}
$$
By (\ref{eq1})， we have 

\begin{equation}\label{rrr}
	\begin{aligned}
		& \int_{\partial B(R) \cap \mathbb{R}_{+}^m} m^{\frac{p}{2}-1}\left\|u^* h\right\|^{p-2} g_0^{i k} g_0^{j l} \frac{\partial u_\alpha}{\partial x_i} \frac{\partial r(x)}{\partial x_j} \frac{\partial u_\beta}{\partial x_k} \frac{\partial u_\beta}{\partial x_l} {u}_\alpha  e^{-\phi} f^{m-4} dv_{g_0} \\
	& +\int_{\partial B(R) \cap \mathbb{R}_{+}^m} \frac{m-2 p}{p^2}|d u|^{2 p-2} g_0^{i j} \frac{\partial u_\alpha}{\partial x_i} \frac{\partial r(x)}{\partial x_j} {u}_\alpha e^{-\phi} f^{m-2} dv_{g_0} \\
	 \geq &\int_{      \mathbb{R}_{+}^m                                                                  \cap (B(R)\backslash B\left(R_2\right))}\left[m^{\frac{p}{2}-1}\left\|u^* h\right\|^p+\frac{m-2 p}{p^2}|d u|^{2 p}+\frac{1}{\epsilon^n}\left(1-|u|^2\right)^2\right]e^{-\phi} dv_g +D^*\left(R_1\right) \text {. } \\
	&	
	\end{aligned}
\end{equation}
Modifying the argument in  \cite{132132132}, at the point $p$, we have 
$$
\begin{aligned}
	& \sum_{i, j, k,l=1}^m \sum_{\alpha, \beta=1}^n m^{\frac{p}{2}-1}\left\|u^* h\right\|^{p-2} g_0^{i k} g_0^{j i} \frac{\partial u_\alpha}{\partial x_i} \frac{\partial r(x)}{\partial x_j} \frac{\partial u_\beta}{\partial x_k} \frac{\partial u_\beta}{\partial x_l} \bar{u}_\alpha e^{-\phi} f^{m-4}  \\
	& +\sum_{i,j=1}^m \sum_{\alpha=1}^n \frac{m-2 p}{p^2}|d u|^{2 p-2} g_0^{i j} \frac{\partial u_\alpha}{\partial x_i} \frac{\partial r(x)}{\partial x_j} \bar{u}_\alpha e^{-\phi} f^{m-2}\\
	& \leq C(m, p)\left[m^{\frac{p}{2}-1}\left\|u^* h\right\|^p+\frac{m-2 p}{p^2}|d u|^{2 p}\right]^{\frac{2 p-1}{2 p}}\left[\sum_{\alpha=1}^n\left({u}_\alpha\right)^2\right]^{\frac{1}{2}} e^{-\phi} f^{m}, \\
	&
\end{aligned}
$$
where $C(m, p)=2^{\frac{1}{p}} \max \left\{m^{\frac{p-1}{2 p}},\left(\frac{m-2 p}{p^2}\right)^{\frac{1}{2 p}}\right\},|\nabla r|=1$. Since $f(x)=x^{\frac{2p-1}{2 p}}, x \geq 0$  is concave，integrate this inequality over $\partial B(R) \cap \mathbb{R}_{r_0}^m$, we have 
\begin{equation}\label{dfg}
	\begin{aligned}
			& \int_{\partial B(R) \cap \mathbb{R}_{+}^m} m^{\frac{p}{2}-1} \| u^* h\|^{p-2} g_0^{i k} g_0^{jl} \frac{\partial u_\alpha}{\partial x_i} \frac{\partial r(x)}{\partial x_j} \frac{\partial u_\beta}{\partial x_k} \frac{\partial u_j}{\partial x_l} {u}_\alpha e^{-\phi} f^{m-4} dS_{g_0} \\
		& +\int_{\partial B(R) \cap \mathbb{R}_{+}^m} \frac{m-2 p}{p^2}|d u|^{2 p-2} g_0^{i j} \frac{\partial u_\alpha}{\partial x_i} \frac{\partial r(x)}{\partial x_j} {u}_\alpha e^{-\phi} f^{m-2} dS_{g_0}\\
		 \leq &  C(m, p)\left[\int_{\partial B(R) \cap \mathbb{R}_{+}^m}\left[m^{\frac{p}{2}-1}\left\|u^* h\right\|^p+\frac{m-2 p}{p^2}|d u|^{2 p}+\frac{1}{\varepsilon^n}\left(1-|u|^2\right)^2\right]e^{-\phi} f^{m}  d S_{g_0}\right]^{\frac{2 p-1}{2 p}} \\
		& \times\left[\int_{\partial B(R) \cap \mathbb{R}_{+}^m}\left(\sum_{a=1}^n {u}_\alpha^2\right)^p e^{-\phi} f^{m}d S_{g_0}\right]^{\frac{1}{2 p}} .	
	\end{aligned}
\end{equation}
By (\ref{rrr}) and \eqref{dfg}, we have 
\begin{equation}\label{gr}
	\begin{aligned}
		& \int_{      \mathbb{R}_{+}^m                                                                  \cap (B(R)\backslash B\left(R_2\right))}\left[m^{\frac{p}{2}-1}\left\|u^* h\right\|^p+\frac{m-2 p}{p^2}|d u|^{2 p}+\frac{1}{\epsilon^n}\left(1-|u|^2\right)^2\right] e^{-\phi}d v_g+D\left(R_1\right) \\
		\leq & C(m, p)\left[\int_{\partial B(R) \cap \mathbb{R}_{r_0}^m}\left( m^{\frac{p}{2}-1}\left\|u^* h\right\|^p+\frac{m-2 p}{p^2}|d u|^{2 p}+\frac{1}{\varepsilon^n}\left(1-|u|^2\right)^2\right) e^{-\phi} f^{m}  d S_{g_0}\right]^{\frac{2 p-1}{2 p}} \\
		& \times\left[\int_{\partial B(R) \cap \mathbb{R}_{r_0}^m}\left(\sum_{a=1}^n u_\alpha^2\right)^p e^{-\phi} f^{m}  d S_{g_0}\right]^{\frac{1}{p}},
	\end{aligned}
\end{equation}
Set
$$
Z(R)=\int_{      \mathbb{R}_{+}^m                                                                  \cap (B(R)\backslash B\left(R_2\right))}\left[ m^{\frac{p}{2}-1}\left\|u^* h\right\|^p+\frac{m-2 p}{p^2}|d u|^{2 p}+\frac{1}{\varepsilon^n}\left(1-|u|^2\right)^2\right] e^{-\phi}d v_g+D^*\left(R_1\right)_{,}
$$

$$
\frac{d}{d R} Z(R)=\int_{\partial B(R) \cap \mathbb{R}_{+}^m}\left[m^{\frac{p}{2}-1}\left\|u^2 h\right\|^p+\frac{m-2 p}{p^2}|d u|^{2p}+\frac{1}{\epsilon^n}\left(1-|u|^2\right)^2\right]e^{-\phi} dS_g
$$
The reamined proof is almost the same as \cite{132132132}. By \eqref{gr}, we have 
$$
Z(R) \leq C(m, p)\left[\frac{d}{d R} Z(R)\right]^{\frac{2 p-1}{2 p}}\left[\int_{\partial B(R)}\left(\sum_{\alpha=1}^m {u}_\alpha^2\right)^p e^{-\phi} f^{m}  d S_{g_0}\right]^{\frac{1}{2 p}}.
$$
On the other hand ，
$$
Z(R)-D^*\left(R_1\right)=\int_{\mathbb{R}_{+}^m \cap \left( B(R) \backslash B\left(R_2\right)\right) }\left[m^{\frac{p}{2}-1} \| u^* h\|^p+\frac{m-2 p}{p^2}|d u|^{2 p}+\frac{1}{e^n}\left(1-|u|^2\right)^2\right]e^{-\phi} d v_g.
$$
Since $\Phi_{s, p, t}(u)$ is  undoubded ,  there exists $R_3>R_2$ ,such that  $R>R_3, Z(R)>0$. i.e.
$$
M(R)=\left[\int_{\mathbb{R}_{+}^m \cap \partial B(R)}\left(\sum_{n=1}^m u_\alpha^2\right)^p e^{-\phi} f^{m}  d S_{g_0} \right]^{\frac{1}{2p-1}}.
$$
We get 
$$
\frac{\frac{d Z(R)}{d R}}{Z^{\frac{2p}{2 p-1}}(R)} \geq \frac{1}{C(m, p)^{\frac{2 p}{2 p-1}} M(R)}
.$$
For $R_4>R>R_3$,
$$
\int_R^{R_4} \frac{\frac{d Z(r)}{d r}}{Z^{\frac{2 p}{2p-1}}(r)} d r \geq \frac{1}{C(m, p)^{\frac{2 p}{2p-1}} } \int_R^{R_4} \frac{1}{M(r)} d r.
$$
Let $R_4 \rightarrow \infty$  and noticing that  $Z(R)>0$, we get
\begin{equation}\label{qw}
	\begin{aligned}
		Z(R) \leq C(m, p)\left[\frac{1}{\int_R^{\infty} \frac{1}{M(r)} d r}\right]^{2 p-1}.
	\end{aligned}
\end{equation}
However,
$$
M(R) \leq \eta^{\frac{1}{2p-1}}(R)\left(\int_{\partial B(R)} e^{-\phi} f^{m}  d S_{g_0}\right)^{\frac{1}{2 p-1}}.$$

As in \cite{132132132}, we choose function $ \eta(R) $ such that 

(i) On $\left(R_3, \infty\right)$, $\eta(R)$ is decreasing  and when  $R \rightarrow \infty$, $m(R) \rightarrow 0$;

(ii) $\eta(R) \geq \max\limits_{r(x)=R}\left(\sum_{\alpha=1}^m {u}_\alpha^2\right)^2$.

It follows that 
$$
\int_R^{\infty} \frac{1}{M(r)} d r \geq \frac{1}{\eta^{\frac{1}{2 p-1}}(R)} \int_R^{\infty} \frac{1}{\left(\int_{\partial B(R)} e^{-\phi} f^{m}  d S_{g_0}\right)^{\frac{1}{2 p-1}}} d r \geq \frac{1}{\eta^{\frac{1}{2p-1}}(R)} R^{-\frac{c}{2p-1}},
$$
by (\ref{qw}), we have  
$$ Z(R) \leq C(m, p) \eta(R) R^{c}, R>R_3, $$
thus by the definition of $ Z(R) $ and $ E_{\Phi_{S, p, \epsilon}}^R(u),$ we get
$$
E_{\Phi_{S, p, \epsilon},\phi}^R(u) \leq C(m, p)\left(\frac{\eta(R)}{2 p}+\frac{c(u)}{R^{C_0}}\right) R^{c}.
$$	
This is an contradiction.

\end{proof}

Similarly, by the same argument as in Theorem \ref{369}, we have 
	\begin{thm}
	Let $u:\left(\mathbf{R}_{+}^{m}, f^2g_{0}, e^{-\phi} \mathrm{d} v_{g}\right) \rightarrow\left(N^{n}, h\right)$ be   $\phi$-$ \Phi_{S, p, \epsilon} $ harmonic map, and  the conditions \eqref{erf} and \eqref{er5} hold. For any $p \in N^{n}$, if there is an (nonempty) open neighbourhood $U_{p} \subset N^{n}$, such that the family of open sets $\left\{U_{p} | p \in N^{n}\right\}$ has the following property: for some $p \in N^{n}, u(x) \in U_{p}$ as $r(x) \rightarrow \infty$, then $u$ is a constant map.	

\end{thm}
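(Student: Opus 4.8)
The plan is to weaken the pointwise convergence $u(x)\to p_0$ of \autoref{thmabc1} to the mere confinement of the image inside one coordinate neighbourhood, following the passage from \autoref{thmabc} to \autoref{369}. First I would borrow the construction of Dong \cite[Theorem 5.4]{MR3449358} to produce, for every $p\in N^n$, a coordinate chart $(U_p,\varphi)$ with $\varphi(p)=0$ on which
\[
\left(\frac{\partial h_{\alpha\beta}(y)}{\partial y^\gamma}y^\gamma+2h_{\alpha\beta}(y)\right)\geq\left(h_{\alpha\beta}(y)\right),\qquad h_{\alpha\beta}(y)y^\alpha y^\beta\leq C_p,
\]
the first being an inequality of quadratic forms and the second holding with a constant $C_p$ attached to $U_p$. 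The hypothesis furnishes some $p$ and some $R_1$ with $u(x)\in U_p$ whenever $r(x)>R_1$, so all subsequent computations may be carried out in the chart $\varphi$, where the coordinate functions $u^\alpha$ are well defined and, by the second inequality, bounded.

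Next I would transcribe the machinery of \autoref{thmabc1} essentially word for word. Reflect $u$ evenly across $\partial\mathbf{R}^m_+$ to obtain the smooth extension $\tilde u$ on $U_{r_0}\cup\mathbf{R}^m_+$; test the Euler--Lagrange equation $\tau_{\Phi_{S,p,\varepsilon},\phi}(u)=0$ against $\omega=\phi(r(x))\Phi(x_m)\hat u$ with $\hat u_\alpha=\frac{\tilde u_\alpha^2-c_\alpha^2}{\tilde u_\alpha}$ and $\sum_\alpha c_\alpha^2=1$; and let $s\to0$, so that $\tilde u\to u$ and $\hat u\to\bar u$, recovering the identity \eqref{eq1}. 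Here the first structural inequality is exactly what bounds the left-hand side below by the full $\Phi_{S,p,\varepsilon}$-energy density, while $|\nabla r|=1$ and the Young/Hölder step of \cite{132132132} convert the boundary integral on $\partial B(R)$ into $Z(R)\leq C(m,p)\big[\frac{dZ}{dR}\big]^{\frac{2p-1}{2p}}\big[\int_{\partial B(R)}(\sum_\alpha u_\alpha^2)^p e^{-\phi}f^m\,dS_{g_0}\big]^{\frac1{2p}}$. Defining $Z(R)$ and $M(R)$ as in \autoref{thmabc1} and integrating this differential inequality produces $Z(R)\leq C(m,p)\big(\int_R^\infty M(r)^{-1}\,dr\big)^{-(2p-1)}$.

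Finally I would close the estimate using \eqref{er5} together with the image bound. Choosing $\eta(R)\geq\max_{r(x)=R}(\sum_\alpha u_\alpha^2)^2$ as in \cite{132132132}, the confinement $h_{\alpha\beta}u^\alpha u^\beta\leq C_p$ controls $M(R)\leq\eta^{\frac1{2p-1}}(R)\big(\int_{\partial B(R)}e^{-\phi}f^m\,dS_{g_0}\big)^{\frac1{2p-1}}$, so \eqref{er5} yields $Z(R)\leq C(m,p)\,\eta(R)\,R^{c}$. If $u$ were non-constant, the monotonicity of \autoref{thm3} (valid under \eqref{erf}) would force the opposite growth $Z(R)\geq C R^{c}$, and feeding the smallness of the image bound into $\eta$ contradicts this, exactly as in \autoref{thmabc1} and \autoref{369}; hence $u$ must be constant. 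I expect the only genuinely delicate point to be the geometric construction of the family $\{U_p\}$ carrying the convexity inequality, since that is where the curvature and convexity hypotheses on the target $(N,h)$ enter, and it is precisely what lets the single-point limit of \autoref{thmabc1} be replaced by confinement of the image; the remaining steps are a direct replay of \autoref{thmabc1}.
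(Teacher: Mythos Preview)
Your proposal is correct and follows exactly the paper's approach: the paper proves this theorem in one line, ``by the same argument as in Theorem~\ref{369}'', which in turn consists of invoking Dong \cite[Theorem~5.4]{MR3449358} to construct the coordinate neighbourhoods $\{U_p\}$ with the convexity inequality and the arbitrary smallness bound $h_{\alpha\beta}y^\alpha y^\beta\le C_p$, and then replaying the proof of the ``$u(x)\to p_0$'' version (here \autoref{thmabc1}) inside that chart. Your write-up is in fact considerably more detailed than the paper's own sketch, and correctly identifies that the contradiction is driven by the smallness of $C_p$ (replacing the decay $\eta(R)\to 0$) rather than by convergence of $u$.
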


  By modifying the proof of  Theorem 4.1 in \cite{cao2022liouville}, it is not hard to see that 

	\begin{thm} Let $ (\mathbf{R}_{+}^{m}, g=f^2g_0,e^{-\phi}\mathrm{d} v_g) $ be a complete metric measure space with a pole $  x_0.$  Let   $ (N^n, h) $ be a Riemannian manifold. 
	Let  $ u: (\mathbf{R}_{+}^{m},g=f^2 g_0,e^{-\phi}\d v_g)\to (N,h)  $ be  Ginzburg-Landau type $\phi$-$F$ harmonic map coupled with $\phi$-$F$ symphonic map (cf. \cite{cao2022liouville}), satisfying $F^{\prime}\left(\frac{|d u|^{2}}{2}\right)$ $<+\infty,F^{\prime}\left(\frac{|u^{*}h|^{2}}{4}\right)$ $<+\infty$  and the $F$-lower degree $l_{F}>0$.
	%
	If the following condition holds, 
	\begin{equation}\label{ball2}
		\begin{split}
			\int_{R}^{\infty} \frac{1}{\bigg[\left( \int_{\partial B(R)} e^{-\phi} f^{m-2}\d v_{g_0}\right) ^{\frac{1}{2}} \bigg]^{\frac{4}{3}}}\d r\geq R^{-\frac{\sigma}{3}}.
		\end{split}
	\end{equation}
and 
\begin{equation}\label{er4}
	\begin{aligned}
		(\sigma+4d_F-m+\frac{\partial \phi}{\partial x_i}x_i) f(x) \leq (d_F-\frac{m}{2}) \left(2f \frac{\partial f}{\partial x_{i}} \cdot x_{i}\right) , 
	\end{aligned}
\end{equation}
We assume that  $ vol_g(B(R)) = o(R^\sigma), vol_g(\partial B(R)) \gg \frac{1}{4} $ 
$$ \partial_r\phi \geq 0 , f\geq 1,r\frac{\partial \log f }{\partial r}\geq 0, d_F \leq \frac{m}{4}. $$		
Furtherly, $ u (x) \to p_0 $ as $ |x|\to \infty $, then $ u $  is a constant.
\end{thm}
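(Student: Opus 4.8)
The plan is to combine the two differential-inequality arguments already carried out for the $\phi$-$F$ symphonic map in \autoref{thmabc} and for the $\phi$-$F$ harmonic map in \autoref{thm2}, since the coupled functional is simply the sum of the two energy densities. First I would record the growth estimate coming from the scaling vector field: defining $V_t(x)=u(tx)$ and $\Phi(R,t)=\int_{B(R)}\big[F(\tfrac{|dV_t|^2}{2})+F(\tfrac{|V_t^*h|^2}{2})+\tfrac{1}{4\varepsilon^n}(1-|V_t|^2)^2\big]e^{-\phi}\d v_g$, the free boundary condition $\frac{\partial u}{\partial v}\perp T_uS$ kills the boundary contribution so that $\frac{\partial}{\partial t}\Phi\big|_{t=1}\ge 0$. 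Exactly as in the proofs of \autoref{thm1a} and \autoref{58}, the structural hypothesis \eqref{er4} on $f,\phi$ together with $d_F\le\frac m4$ then yields $\frac{d}{dt}\big|_{t=1}\Phi\le -\sigma\,\Phi+R\,\frac{d}{dR}\Phi$, whose integration produces the lower bound $E(u)\ge C(u)R^{\sigma}$ as $R\to\infty$ whenever $u$ is nonconstant.

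Next I would perform the boundary reflection. As in \cite{MR2766664,MR2050491,gulliver1987harmonic}, using $u(x)\to p_0$ I extend $u$ across $\partial\mathbf{R}_+^m$ by reflecting along minimal geodesics to $S$, obtaining a smooth $\tilde u$ on $U_{r_0}\cup\mathbf{R}_+^m$ solving the same Euler--Lagrange system. Choosing a coordinate chart $(U,\varphi)$ with $\varphi(p_0)=0$ and the cut-off $\omega=\psi(r)\Phi(x_m)\,\tilde u$ (with $\psi,\Phi$ as in \eqref{equ2}--\eqref{equ3}) in the first variation, and then letting the boundary smoothing parameter $s\to0$ so that $\tilde u\to u$, I obtain an integral identity equating an annular bulk term $Z(R)$ (the coupled energy over $B(R)\setminus B(R_2)$ plus a fixed constant $D^*(R_1)$) to a boundary integral over $\partial B(R)$. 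Here both the symphonic weight $f^{m-4}$ and the harmonic weight $f^{m-2}$ occur, and the inequality $(\partial h_{\alpha\beta}/\partial y^\gamma)y^\gamma+2h_{\alpha\beta}\ge h_{\alpha\beta}$ near $p_0$ is what makes the bulk term coercive.

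The analytic core is then a Hölder estimate of the boundary integral. Splitting off the factor $\big(\sum_{\alpha\beta}h_{\alpha\beta}u^\alpha u^\beta\big)$ and applying Cauchy--Schwarz (for the harmonic piece, as in \cite[(4.5)]{MR3449358}) and the quartic Hölder split (for the symphonic piece, as in \cite[(16)]{Han2021}), I expect to arrive at a single differential inequality $Z^{4/3}(R)\le C\,Z'(R)\,M(R)$, where, matching \eqref{ball2}, $M(R)=\big(\int_{\partial B(R)}\big(\sum_{\alpha\beta}h_{\alpha\beta}u^\alpha u^\beta\big)\,e^{-\phi}f^{m-2}\d v_{g_0}\big)^{2/3}$, after using $F'(\tfrac{|du|^2}2),F'(\tfrac{|u^*h|^2}4)<\infty$ and $u\to0$. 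Integrating $Z'/Z^{4/3}$ from $R$ to $\infty$ and invoking the growth hypothesis \eqref{ball2} gives $Z(R)\le C\,\eta(R)R^{\sigma}$ with $\eta(R)\to0$. Finally the $F$-lower degree hypothesis $l_F>0$ furnishes the matching lower bound $Z(R)\ge l_F\int_{B(R)\setminus B(R_2)}\big[F(\tfrac{|du|^2}2)+F(\tfrac{|u^*h|^2}4)\big]e^{-\phi}\d v_g\ge CR^{\sigma}$, contradicting $\eta\to0$.

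I expect the main obstacle to be reconciling the two different homogeneities inside one differential inequality: the symphonic term is quartic in $du$ and naturally produces the exponent $\tfrac43$ (as in \autoref{thmabc}), while the harmonic term is quadratic and in \autoref{thm2} produced the exponent $2$. Forcing both pieces into the same $Z^{4/3}$ inequality, so that the single integral condition \eqref{ball2} applies, requires carefully tracking which weight, $f^{m-2}$ or $f^{m-4}$, dominates in the boundary integral and using the coupling hypotheses $f\ge1$, $r\,\frac{\partial\log f}{\partial r}\ge0$, $\partial_r\phi\ge0$ together with $\mathrm{vol}_g(B(R))=o(R^\sigma)$ to control the cross terms and the Ginzburg--Landau potential uniformly as $R\to\infty$.
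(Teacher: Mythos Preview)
Your proposal is considerably more detailed than the paper's own argument, which consists of a single sentence: ``By modifying the proof of Theorem 4.1 in \cite{cao2022liouville}, it is not hard to see that'' --- the paper defers entirely to an external reference and gives no further details. Your outline follows exactly the template of the other results in this paper (\autoref{thmabc}, \autoref{thm2}, \autoref{thmabc1}): scaling to obtain the polynomial lower bound on energy, reflection across $\partial\mathbf{R}_+^m$, cut-off test function in the first variation, H\"older splitting of the boundary term, and the $Z'/Z^{4/3}$ integration. This is certainly the intended spirit, and is presumably what Theorem~4.1 of \cite{cao2022liouville} does in the interior case.

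The obstacle you flag --- forcing the quadratic harmonic piece (which in \autoref{thm2} gave a $Z^2\le C Z'M$ inequality) and the quartic symphonic piece (which in \autoref{thmabc} gave $Z^{4/3}\le C Z'M$) into a single differential inequality compatible with the one growth hypothesis \eqref{ball2} --- is genuine and is not resolved in the paper either. Indeed, the author's own remark immediately after the statement (``The conditions in this theorem is too restrictive, thus it needs to be improved'') signals awareness that the list of auxiliary hypotheses ($f\ge1$, $\partial_r\phi\ge0$, $r\partial_r\log f\ge0$, $\mathrm{vol}_g(B(R))=o(R^\sigma)$, $\mathrm{vol}_g(\partial B(R))\gg\tfrac14$) is there precisely to force the two weights $f^{m-2}$ and $f^{m-4}$ and the two exponents into alignment, and that this is done crudely. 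So your hesitation on this point is well-placed and matches the paper's own assessment rather than constituting a gap in your plan.
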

\begin{rem}
	The conditions in this theorem is too restrictive, thus it needs to be improved. 
\end{rem}


%

\end{document}